\newtheorem{theorem}{Theorem}[section]
\newtheorem{lemma}[theorem]{Lemma}
\newtheorem{proposition}[theorem]{Proposition}
\newtheorem{corollary}[theorem]{Corollary}
\theoremstyle{definition}
\newtheorem{definition}[theorem]{Definition}
\newtheorem{problem}[theorem]{Problem}
\theoremstyle{remark}
\newtheorem{remark}[theorem]{Remark}
\newtheorem{notation}[theorem]{Notations}
\numberwithin{equation}{section}
\newcommand{\T}{\mathrm{T}}
\newcommand{\R}{\mathbb{R}}
\newcommand{\Z}{\mathbb{Z}}
\newcommand{\x}{\mathbf{x}}
\newcommand{\y}{\mathbf{y}}
\newcommand{\dd}{\mathrm{d}}
\begin{document}

\title{Hausdorff dimension of divergent diagonal geodesics on product of finite volume hyperbolic spaces}

%    Information for first author
\author{Lei Yang $^{\ast}$}
\address{Mathematical Sciences Research Institute, Berkeley, CA, 94720, U.S.A.}

%    Current address
\curraddr{Einstein Institute of Mathematics, Hebrew University of Jerusalem, Jerusalem, 9190401, Israel}
\email{yang.lei@mail.huji.ac.il}
%    \thanks will become a 1st page footnote.
%\thanks{}
\thanks{$^{\ast }$ Supported in part by a Postdoctoral Fellowship at MSRI}%    Information for second author

%    General info
%\subjclass[2010]{Primary 37A17; Secondary }

\date{}

%\dedicatory{}

%\keywords{}
\begin{abstract}
In this article, we consider the product space of several non-compact finite volume hyperbolic spaces, 
$V_1, V_2, \dots , V_k$ of dimension $n$. Let 
$\mathrm{T}^1(V_i)$ denote the unit tangent bundle of $V_i$ for each $i=1,\dots , k$, then for every 
$(v_1, \dots , v_k) \in \mathrm{T}^1 (V_1) \times \cdots \times \mathrm{T}^1 (V_k)$, the diagonal geodesic flow $g_t$ is defined by
$g_t (v_1, \dots , v_k) = (g_t v_1, \dots , g_t v_k)$. And we define 
$$\mathfrak{D}_k =\left\{ (v_1, \dots, v_k) \in \mathrm{T}^1 (V_1) \times \cdots \times \mathrm{T}^1 (V_k): g_t(v_1, \dots, v_k) 
\text{ divergent, as } t\rightarrow \infty\right\}. $$
\par We will prove that the Hausdorff dimension of $\mathfrak{D}_k$ is equal to $k(2n-1) - \frac{n-1}{2}$. This extends the result of 
Yitwah Cheung ~\cite{Cheung1}.
\end{abstract}

\maketitle
\section{Introduction}
\par In ~\cite{Cheung1}, Yitwah Cheung considers the following interesting problem.
\par Let $\mathcal{M}_k$ be the product space of $k$ copies of $\mathrm{SL}(2,\Z)\setminus\mathrm{SL}(2,\R)$, 
$$\mathcal{M}_k := \left(\mathrm{SL}(2,\Z)\setminus \mathrm{SL}(2,\R)\right)^k.$$
Let $A$ denote the diagonal subgroup of $\mathrm{SL}(2,\R)$:
$$A := \left\{a(t)= \begin{bmatrix}e^t & \\ & e^{-t}\end{bmatrix}: t \in \R \right\}.$$
The diagonal action of $A$ on $\mathcal{M}_k$ is defined as follows: for $(v_1, v_2, \dots, v_k) \in \mathcal{M}_k$, 
where $v_i \in \mathrm{SL}(2,\Z)\setminus \mathrm{SL}(2,\R)$ for $i=1,2,\dots, k$, 
$$(v_1, v_2,\dots,v_k)a(t) : = (v_1 a(t),v_2 a(t),\dots, v_k a(t)) \in \mathcal{M}_k.$$
The divergent set $\mathcal{D}_k$ is defined to be collection of points $(v_1,v_2,\dots, v_k)\in \mathcal{M}_k$ such that 
$a(t)(v_1,v_2,\dots,v_k)$ diverges as $t\rightarrow +\infty$. One could ask what the Hausdorff dimension of $\mathcal{D}_k$ 
is.
\par From geometric point of view, $\mathrm{SL}(2,\Z)\setminus \mathrm{SL}(2,\R)$ can be identified as the unit tangent 
bundle of the hyperbolic space $\mathrm{SL}(2,\Z)\setminus \mathbb{H}^2$. The action of $A=\{a(t): t\in \R\}$ is the geodesic flow 
$\{g_t: t \in \R\}$ on $\T^1(\mathrm{SL}(2,\Z)\setminus \mathbb{H}^2)$. So $\mathcal{M}_k$ can be regarded as product of $k$ copies of unit tangent bundle 
of hyperbolic space $\mathrm{SL}(2,\Z)\setminus \mathbb{H}^2$, and the diagonal action of $A^{+}$ is the 
diagonal geodesic flow $g_t\times g_t \times \cdots \times g_t$ on $\mathcal{M}_k$. $\mathcal{D}_k$ is regarded as 
$$\mathcal{D}_k := \{(v_1,v_2,\dots, v_k)\in \mathcal{M}_k: (g_t v_1, g_t v_2,\dots, g_t v_k )\rightarrow \infty \text{ as } t \rightarrow +\infty\}.$$
\par When $k=1$, the problem is easy. According to a result of Dani ~\cite{Dani1}, for $v\in \mathrm{SL}(2,\Z)\setminus \mathrm{SL}(2,\R)$,
$a(t)v$ diverges as $t \rightarrow +\infty$ if and only if $v$ belongs to some closed $U$-orbit 
in $\mathrm{SL}(2,\Z)\setminus \mathrm{SL}(2,\R)$, where 
$$U:= \left\{u(x)= \begin{bmatrix}1 & x \\ 0 & 1\end{bmatrix}: x\in\R\right\}$$
denotes the horocyclic subgroup contracted by $A^{+} := \{a(t): t > 0\}$. Therefore the Hausdorff dimension is equal to 
$\dim \mathcal{M}_1 -1 = 2$.
\par When $k \geq 2$, the problem becomes interesting and complicated. It turns out that most divergent trajectories have non-divergent 
projection to each component, that is to say, most $(v_1,v_2,\dots, v_k)\in \mathcal{D}_k$ satisfies that for each $i=1,2,\dots, k$, 
$\{a(t)v_i: t >0\}$ does not diverge as $t\rightarrow +\infty$. In ~\cite{Cheung1}, Cheung showed that the Hausdorff dimension of 
$\mathcal{D}_k$, 
$$\dim_H \mathcal{D}_k = \dim \mathcal{M}_k - \frac{1}{2}.$$
In ~\cite{Cheung1}, Cheung established some general strategy to get lower bound and upper bound of Hausdorff dimension. To compute the Hausdorff 
dimension of $\mathcal{D}_k$, Cheung made use of continued fractions to encode trajectories under the action of $A$, and a result on counting 
integer points in a particular region of $\R^2$, which he proved in ~\cite{Cheung2}.
\par Because of this interesting result, it is natural to ask what happens if we replace $\mathbb{H}^2$ by $\mathbb{H}^n$ and replace the special lattice 
$\mathrm{SL}(2,\Z)$ by other noncocompact lattices $\Gamma_i < \mathrm{Iso}(\mathbb{H}^n) = \mathrm{SO}(n,1)$. To be precise, one could consider $k$ 
noncompact hyperbolic spaces
$V_i =\Gamma_i\setminus \mathbb{H}^n$ where $\Gamma_i < \mathrm{SO}(n,1)$ is a noncocompact lattice of $\mathrm{SO}(n,1)$, i.e., $V_i$ is not 
compact and has finite volume. We define $\mathcal{M}_k := \T^1(V_1)\times \T^1(V_2) \times \cdots \times \T^1(V_k)$, and consider
the diagonal geodesic flow $g_t\times g_t \times \cdots \times g_t$ on $\mathcal{M}_k$ defined the same as above. Let $\mathcal{D}_k$ denotes 
the set of points with divergent forward trajectories, one could ask what the Hausdorff dimension of $\mathcal{D}_k$ is.
\par In this article we extend Cheung's work as follows:
\begin{theorem}
 \label{goal}
 Let $k \geq 2$ and $V_1, V_2, \dots, V_k$ be $k$ non-compact finite volume hyperbolic spaces of dimension $n$ and 
 $$\mathcal{M}_k := \mathrm{T}^1(V_1)\times \cdots \times \mathrm{T}^1(V_k)$$
 with diagonal geodesic flow $g_t : \mathcal{M}_k \rightarrow \mathcal{M}_k$ described as before. Denote 
 $$\mathcal{D}_k:= \{\mathfrak{m} \in \mathcal{M}_k : g_t(\mathfrak{m}_k) \text{ diverges as } t \rightarrow \infty \},$$
 then its Hausdorff dimension $\dim_H \mathcal{D}_k = k(2n-1) -\frac{n-1}{2}$.
\end{theorem}
\par This work can be regarded as a part of a large program of studying 
the behavior of trajectories under diagonal flow and calculation of Hausdorff dimension
of trajectories with certain properties in various dynamical systems. In ~\cite{Kleinbock_Margulis}, 
Kleinbock and Margulis studied bounded trajectories on homogeneous spaces 
under nonquasiunipotent flows and proved that the set of bounded trajectories has
full Hausdorff dimension, although it has zero Lebesgue measure. The calculation makes use of the 
mixing property of nonquasiunipotent flows which is also the main tool of this article. As per
divergent trajectories, Yitwah Cheung studied the trajectories on the homogeneous space 
$\mathrm{SL}(3,\mathbb{R})/ \mathrm{SL}(3,\mathbb{Z})$ under the diagonal flow 
$a(t) := \mathrm{diag}\{e^t, e^t, e^{-2t} \}$
and proved that the Hausdorff dimension of set of divergent trajectories is 
$\dim (\mathrm{SL}(3,\mathbb{R})/ \mathrm{SL}(3,\mathbb{Z})) - \frac{2}{3}$, i.e., the Hausdorff co-dimension
is equal to $\frac{2}{3}$ (see ~\cite{Cheung3}). This result was 
recently extended to the space 
$\mathrm{SL}(d+1, \mathbb{R}) / \mathrm{SL}(d+1, \mathbb{Z})$ with diagonal flow 
$\mathrm{diag}\{e^t, e^t,\dots, e^t, e^{-dt}\}$ for arbitrary
$d \geq 2$ by Cheung and Chevallier (see ~\cite{Cheung_Chevallier}).  If we do not restrict our
attention on homogeneous spaces, we will find that the geodesic flows on translation surfaces share 
many properties in common. In ~\cite{Cheung2}, Cheung showed that the Hausdorff dimension of nonergodic
direnctions of some particular translation surface is equal to $1/2$, with main ideas similar to that 
of ~\cite{Cheung1}. This work was later extended by Cheung, P. Hubert and H. Masur 
in ~\cite{Cheung_Hubert_Masur}. The work in this direction can date back to the work of Masur and Smillie (see ~\cite{Masur_Smillie}) and
that of Masur (see ~\cite{Masur}).
\par The basic idea to compute $\dim_H \mathcal{D}_k$ goes as follows: 
\par For a non-compact finite volume $n$-dimensional hyperbolic space $V \cong \Gamma \setminus \mathbb{H}^n$, we focus our attention to
the set of cusp points with respect to $\Gamma$ on the ideal boundary sphere 
$\partial \mathbb{H}^n \cong \mathbb{S}^{n-1}$ of $\mathbb{H}^n$, each of which
corresponds to an infinite end of a fundamental domain of $V$. To study a particular geodesic ray $\mathcal{G}$ in $V$, 
it suffices to consider one of its lifts $\tilde{\mathcal{G}}$ in $\mathbb{H}^n$,
then at some moment $\mathcal{G}$ is ''near $\infty$ " in $V$ if and only if $\tilde{\mathcal{G}}$ is ``close to" 
some cusp point on $\partial \mathbb{H}^n$, this associates every geodesic ray with a sequence of cusp points, 
and the time $\tilde{\mathcal{G}}$ stays near a cusp point can be estimated by some quantity called the 
height of the cusp point.
\par To get the lower bound of the Hausdorff dimension of $\mathcal{D}_k$, it suffices to consider the case when $k=2$ 
(since if a trajectory has divergent projection on the first two components, then itself is also divergent). On the first component, we choose the 
selection of geodesic rays such that the height of next cusp point is ``much larger" than the preceding one, namely, the associate sequence of 
cusp points $\{\mathfrak{a}_k : k \in \mathbb{N}\}$ satisfies that for any $k \in \mathbb{N}$,
$h(\mathfrak{a}_{k+1}) \asymp h^{1+\delta}(\mathfrak{a}_k)$, here $h(\cdot)$
denotes the height of the cusp point, and $\delta > 0$ denotes some small constant. For each such geodesic ray $\mathcal{G}$ associated 
with $\{\mathfrak{a}_k: k \in \mathbb{N}\} $on the 
first component, we choose the set of geodesics on the second component whose heights of corresponding cusp points all stay ``far away" from 
each $h(\mathfrak{a}_k)$, i.e., no height lies in the interval 
$[\frac{h(\mathfrak{a}_k)}{\log h(\mathfrak{a}_k)}, h(\mathfrak{a}_k)\log h(\mathfrak{a}_k)]$ for all $k \in \mathbb{N}$. 
Every pair chosen as above gives a divergent trajectory on the product space, and the choice of 
the first component gives Hausdorff dimension $\frac{n-1}{2}$, and the second component contributes full Hausdorff dimension, this gives the lower 
bound of the Hausdorff dimension. 
\par As per the upper bound of the Hausdorff dimension, we firstly choose a compact subset 
$\mathcal{K}_{\rho} \subset \mathcal{M}_k$ depending on some parameter $\rho > 0$, and define
$$
E_k(\rho) := \{\mathfrak{m} \in \mathcal{M}_k : g_t(\mathfrak{m}) \not\in \mathcal{K}_{\rho} \text{ for all large }t\} ,
$$
and then construct a so called self-similar covering of $E_k(\rho)$, and then apply the inequality proved in ~\cite{Cheung1} to find
the upper bound. This upper should depend on the parameter $\rho$, by letting $\rho \rightarrow 0$, we will get the same upper bound as the 
lower bound. 
\par Compared with the work of Cheung (~\cite{Cheung1}), the new ingredients of this article are as follows:
\par At first, in the work of Cheung, one only
considers the special hyperbolic surface $\mathrm{SL}(2, \Z) \setminus \mathrm{SL}(2,\R)$. In this case, the set of cusp points is 
the set of rational points (including $\infty$) on $\partial \mathbb{H}^2 \cong \R \cup \{\infty\}$, and the height of a rational point
$\frac{p}{q}$ ($p$ and $q>0$ are coprime) is naturally its denominator $q$. But in general case, one needs to define the height of
a cusp point properly so that it has most of the nice properties of the denominator of a rational number. This work is done in the third 
section of the article.
\par Secondly, as we have mentioned above,
to get the lower bound of the Hausdorff dimension of $\mathcal{D}_k$, one needs to fix a geodesic ray $\mathcal{G}$ 
in the first component with property given above, with this fixed 
geodesic ray, one needs to choose the geodesic rays on the second component with heights far away from the heights of cusp points of 
$\mathcal{G}$. To compute the Hausdorff dimension of the geodesic rays on the second component, we will deal with the following counting 
problem on cusp points:
\begin{problem}
 \label{counting problem}
 Given a cusp point $\mathfrak{a} \in \R^{n-1}$, and some large number $t > 0$, how many cusp points $\mathfrak{b}$ satisfy that 
 $h(\mathfrak{b}) \in [e^t h(\mathfrak{a}), 2 e^t h(\mathfrak{a})]$ and $\|\mathfrak{b} - \mathfrak{a}\| \leq \frac{1}{h(\mathfrak{a})}$?
 Here $\|\cdot\|$ denotes the Euclidean norm on $\R^{n-1}$.
\end{problem}

\par In ~\cite{Cheung1}, the above counting problem is tailored to the following counting problems on rational numbers:
\begin{problem}
 Given $d >0$ small and $h >0$ large, such that $hd$ is small enough but $h^2 d$ is large enough, $x \in \R$ is some real number such that 
 there is a convergent $\frac{p}{q}$ of $x$ satisfying $(hd)^{-1}\leq q \leq h$, then how many reduced rational numbers $\frac{p'}{q'}$ in the
 interval $[x-d, x+d]$ satisfy that $q' \in [h, 2h]$?
\end{problem}
This question was answered in another work ~\cite{Cheung2}. In that work, the counting problem on rational numbers was reduced to counting problem
on integer points in $\Z^2$ inside a particular region of $\R^2$. The counting was done via careful study of integer points in $\R^2$.
\par But this argument could not be modified to solve the above general counting problem, the correspondence between cusp points in 
$\R$ and lattice points in $\R^2$ only exists for $\Gamma = \mathrm{SL}(2, \Z)$. For general case, a new approach is needed.
\par It turns out that by making use of the mixing property of geodesic flow on $\T^1(V)$, the counting 
can be done. The detail will be discussed in the fourth section.
\par The result of this article is possible to extend in the following directions:
\begin{problem}
\par One can drop the finite volume condition of the hyperbolic spaces, and instead,
assume that every component $V_i$ is geometrically finite. In this case, one can define $\mathcal{D}_k$ as follows:
$$
\mathcal{D}_k : = \left\{(v_1, \dots, v_k) \in \mathcal{M}_k : g_t(v_1, \dots, v_k) \text{ diverges but } g_t(v_i) \text{ does not diverges for each } 
i \right\}.
$$
And ask what is the Hausdorff dimension of $\mathcal{D}_k$.
\end{problem}
\begin{remark}
We add the condition that $g_t(v_i)$ does not diverge for each $i$ because if $V_i$ has 
infinite volume, the set of divergent trajectories on each $V_i$ has 
full Hausdorff dimension (actually, it has positive Lebesgue measure), the problem will be trivial without the additional condition.
\end{remark}
\par The article is organized as follows: 
\begin{itemize}
\item In the second section, we will recall some basic theory of Lie groups and hyperbolic spaces, and make a basic 
reduction of the original problem.
\item In the third section, we discuss the structure of a general finite volume hyperbolic space $V =\Gamma\setminus \mathbb{H}^n$, general 
properties of cusp points on the ideal infinite boundary $\partial \mathbb{H}^n$
and basic properties of geodesic rays on $V$.
\item In the fourth section, we will prove the counting result on cusp points mentioned above. This is the most important technical result for 
getting the lower bound of Hausdorff dimension of $\mathcal{D}_k$.
\item In the fifth section, we will finish the computation of Hausdorff dimension, the first part will give the lower bound, and the second part 
will give the upper bound.
\end{itemize}
\begin{notation}
\par We will use the following notations: for two quantities $A$ and $B$, we will use $A \ll B$ 
to mean that there is a constant $C>0$, only depending on
the structure of $\Gamma_i$'s, such that $A \leq CB$, we use $A \gg B$ to mean that $B \ll A$, and 
use $A \asymp B$ to mean that $A \ll B$ and $B \ll A$.
\end{notation}

\noindent {\bf Acknowledgement: }This article is part of my thesis, 
I would like to express my deep gratitude to my advisor, Professor Nimish Shah, for his immensurable amount of support 
and guidance during the process of this work. I also would like to thank Dmitry Kleinbock and Yitwah Cheung for reading an earlier
version of this paper and giving a lot of comments and suggestions. Thanks are also due to the referees for many useful suggestions.
\section{Prelimenaries on hyperbolic spaces and basic reduction}
In this section, we recall some basic theory of Lie groups and hyperbolic spaces, and reduce the original problem to 
a relatively simple problem.
\par Let $V$ be some non-compact hyperbolic space of dimension $n$ with finite total volume, then we have 
$V= \Gamma \setminus \mathbb{H}^n$, where $\mathbb{H}^n$ is the 
universal $n$-dimensional hyperbolic space and $\Gamma = \pi_1(V)$. It is well known that 
$\mathrm{Iso}(\mathbb{H}^n) \cong \mathrm{SO}(n,1)$, in this article, we denote it by $G$. For $x \in \mathbb{H}^n$, 
the group of stabilizers of $x$ in $G$ is $K \cong \mathrm{SO}(n)$, so $\mathbb{H}^n \cong G/K$, and 
for $v \in \T^1(\mathbb{H}^n)$, the group of stabilizers of $v$ in $G$ is $M \cong \mathrm{SO}(n-1)$, 
so the unit tangent bundle $\mathrm{T}^1 (\mathbb{H}^n) \cong G/M$. And $\Gamma$ can be identified with a discrete subgroup
of $G$, such that $\Gamma \setminus G$ admits a finite measure invariant under the right multiplication of $G$.
We denote the Lie algebra of $G$ by $\mathfrak{g} =\mathfrak{so}(n,1)$, according to the theory of Lie groups and Lie algebras, 
$G$ admits a one-dimensional maximal $\mathbb{R}$-split torus $A$ and a characteristic $\lambda : A \rightarrow \mathbb{R}_{+}$
such that $\mathfrak{g}$ decomposes as follows according to the adjoint action 
of $A$:
$$\mathfrak{g} = \mathfrak{g}_{-1} \oplus \mathfrak{z}(A) \oplus \mathfrak{g}_{+1}$$
where $\mathfrak{z}(A)$ is the Lie algebra of the centralizer $Z(A)\cong MA$ of $A$, and 
$$\mathfrak{g}_{\pm 1} = \{v \in \mathfrak{g} : \mathrm{Ad}(a) v = \lambda(a)^{\pm 1} v \text{ for any } a \in A\}.$$
We parametrize $A = \{ a(t): t \in \mathbb{R}\}$ such that $\lambda (a(t)) = e^t$.
\par The Weyl group element $\sigma$ with respect to the torus $A$ has a representative in $K$, which we also denote by $\sigma$. Then
$\sigma^2 = \mathrm{id}$ and $\sigma a(t) \sigma^{-1} = a(-t)$.
\par It is well known that $\mathfrak{g}_{+1} \cong \R^{n-1}$, and $\mathfrak{g}_{+}$ is the Lie algebra of the expanding horospherical 
subgroup $N$ with respect to the conjugate action of $A$. We identify $\mathfrak{g}_{+}$ by $\R^{n-1}$ and parametrize $N$ by
$$N = \left\{u(\x) = \exp(\x): \x \in \R^{n-1} \cong \mathfrak{g}_{+1} \right\}.$$
Similarly, $\mathfrak{g}_{-1} \cong \R^{n-1}$ is the Lie algebra of the contracting horospherical subgroup $U^{-}$ with respect to the 
conjugate action of $A$. Because $U^{-} =\sigma N \sigma$, we could parametrize $U^{-}$ by
$$U^{-} = \{u^{-}(\x) = \sigma u(\x) \sigma : \x \in \R^{n-1}\}.$$
In this article, we will fix a Euclidean norm $\|\cdot\|$ on $\R^{n-1}$. 
\par We have the following Iwasawa decomposition: 
$$ \begin{array}{l} 
N \times A \times K \rightarrow G \\ (n, a, k) \mapsto nak\end{array}$$
where the map is group multiplication, and it is a diffeomorphism.
\par It is also well known that 
$$
\begin{array}{l}
 N \times MA \times U^{-} \rightarrow G \\
 (u(\x), ma, u^{-}(\y)) \mapsto u(\x)ma u^{-}(\y)
\end{array}
$$
is a diffeomorphism.
\par Let $P = MAN$ denote a parabolic subgroup of $G$, we have the following
Bruhat decomposition:
$$G = P \cup N\sigma P$$
Then the ideal boundary $\partial \mathbb{H}^n \cong G/P = P/P \cup N\sigma P/P$, we may identify $N\sigma P/P$ with $\mathbb{R}^{n-1}$
and denote $P/P$ by $\infty$. Then we have $\partial \mathbb{H}^n = \mathbb{R}^{n-1} \cup \{ \infty\} \cong  \mathbb{S}^{n-1}$.
\par To study the homogeneous space $\Gamma \setminus G$, we need to know the shape of its 
fundamental domain, especially its shape near infinity. 
\par Let $\eta \subset N$ be a compact subset of $N$, and for some $s \in \mathbb{R}$, denote
$$A_s = \{a(t): t\geq s\} \subset A$$
We define
$$\Omega(\eta, s) = \eta A_s K$$
Thanks to Garland and Raghunathan, we have the following result concerning the fundamental domain:
\begin{theorem}{(See ~\cite[Theorem 0.6 and Theorem 0.7]{Garland_Raghrunathan})}
\label{G_R}
\par There exists $s_0 > 0$, a compact subset $\eta_0$ of $N$ and a finite subset $\Xi$ of $G$ such that
\begin{enumerate}
\item $G = \Gamma \Xi \Omega (s_0, \eta_0)$
\item for all $\xi \in \Xi$, the group $\Gamma \cap \xi N \xi^{-1}$ is a cocompact lattice in $\xi N \xi^{-1}$
\item for all compact subset $\eta$ of $N$ the set
$$\{\gamma \in \Gamma : \gamma \Xi \Omega (s_0, \eta) \cap \Omega (s_0, \eta) \neq \emptyset \}$$
is finite
\item for each compact subset $\eta$ of $N$ containing $\eta_0$ there exists $s_1 > s_0$ such that for all $\xi_1, \xi_2 \in \Xi$
and $\gamma \in \Gamma$ with $\gamma \xi_1 \Omega (s_0, \eta) \cap \xi_2 \Omega(s, \eta) \neq \emptyset$, we have $\xi_1 =\xi_2 $
and $\gamma \in \xi_1 N M \xi_1^{-1}$
\end{enumerate}
\end{theorem}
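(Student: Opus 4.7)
The plan is to follow the classical reduction-theory (Siegel-set) approach. The set $\Xi$ should index the $\Gamma$-orbits of parabolic fixed points on $\partial\mathbb{H}^n = G/P$, and each $\xi\,\Omega(s_0,\eta_0)$ should serve as a Siegel-type neighborhood of the cusp $\xi\cdot\infty$. The four clauses then correspond, respectively, to a covering statement, cocompactness of the unipotent radical of each cusp stabilizer, properness of the $\Gamma$-action on translated Siegel sets, and Bruhat-type almost-disjointness of truncations high in the cusps.

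First, I would show that $\Gamma$ has only finitely many orbits of parabolic fixed points on $\partial\mathbb{H}^n$. This is where the finite-volume hypothesis enters: by a Margulis-lemma argument each cusp contributes a horoball neighborhood whose volume in $\Gamma\backslash\mathbb{H}^n$ is bounded below, so only finitely many cusps can coexist. Pick a representative $\xi_i\in G$ with $\xi_i\cdot\infty$ one in each $\Gamma$-orbit and set $\Xi=\{\xi_1,\dots,\xi_m\}$. For property~(2), the stabilizer of $\xi_i\cdot\infty$ in $\Gamma$ equals $\Gamma\cap\xi_iP\xi_i^{-1}$. Its image under the projection $P=MAN\to A$ must be trivial, for a nontrivial element of $A$ in the cusp stabilizer would rescale the associated horoball and force infinite volume in the quotient. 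Hence $\Gamma\cap\xi_iP\xi_i^{-1}\subset\xi_iMN\xi_i^{-1}$; killing the compact factor $M$ yields a discrete cocompact subgroup of $\xi_iN\xi_i^{-1}\cong\mathbb{R}^{n-1}$, which is the required lattice.

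For (1) and (3) I would choose $\eta_0\subset N$ large enough to contain a compact fundamental domain for each lattice $\xi_i^{-1}\Gamma\xi_i\cap N$ acting on $N$, and $s_0$ so large that the horoballs $\xi_i\,\Omega(s_0,\eta_0)K/K$ embed disjointly in $\Gamma\backslash\mathbb{H}^n$; this is guaranteed by the Margulis lemma. The complement of these embedded cusp neighborhoods in $\Gamma\backslash\mathbb{H}^n$ has injectivity radius bounded below and finite volume, hence is compact; enlarging $\eta_0$ (and, if necessary, lowering $s_0$) so that $\Xi\,\Omega(s_0,\eta_0)$ covers its preimage in $G$ gives~(1). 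Assertion~(3) then follows from discreteness of $\Gamma$ and properness of the $\Gamma$-action on $G$: outside the union of the cusp neighborhoods, $\Omega(s_0,\eta)$ projects into a compact subset of $\Gamma\backslash G$, on which only finitely many $\gamma$ can produce returns.

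The crux is (4). Given $\gamma\in\Gamma$ and $\xi_1,\xi_2\in\Xi$ satisfying the hypothesis, I would analyze $g:=\xi_2^{-1}\gamma\xi_1\in G$ using the Bruhat decomposition $G=P\sqcup N\sigma P$. If $g\in P$, the one-orbit-per-representative choice of $\Xi$ forces $\xi_1=\xi_2$, and triviality of the $A$-projection of the cusp stabilizer then forces $g\in MN$, which is the desired conclusion. If instead $g=u\sigma p$ with $u\in N$ and $p\in P$, the fact that $\sigma$ inverts $A$ and swaps the contracting and expanding horospherical subgroups leads, via a direct Iwasawa computation inside the $\mathfrak{sl}_2$-triple generated by $A$ and the corresponding root vectors, to an \emph{upper} bound on the $A$-coordinate of any point in $g\,\Omega(s_0,\eta)\cap\Omega(s_1,\eta)$ in terms of $\eta$ and the Bruhat datum of $g$. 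The main obstacle will be upgrading this bound to one that is uniform across the finitely many relevant $(\xi_1,\xi_2,\gamma)$ modulo cusp stabilizers; once that uniformity is established, taking $s_1$ larger than the supremum of these bounds rules out the $N\sigma P$ case and completes the proof.
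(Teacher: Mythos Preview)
The paper does not prove this theorem at all: it is quoted verbatim from Garland--Raghunathan (the citation ``See~\cite[Theorem~0.6 and Theorem~0.7]{Garland\_Raghrunathan}'' is the entire content the paper supplies), and is used throughout as a black box. There is therefore no ``paper's own proof'' to compare your proposal against.

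That said, your outline is a reasonable sketch of the classical reduction-theory argument and is broadly in the spirit of Garland--Raghunathan's original proof. A couple of points would need tightening if you were to carry it out in full. First, your argument for~(3) is not quite right as stated: the set $\Omega(s_0,\eta)=\eta A_{s_0}K$ is itself unbounded (it runs all the way up the cusp), so it does not ``project into a compact subset of $\Gamma\backslash G$''. The correct mechanism is rather that any $\gamma$ producing a return must move the basepoint $\xi\cdot o$ by a bounded amount in $\mathbb{H}^n$, and discreteness of $\Gamma$ then gives finiteness. Second, in~(4) you correctly identify the main obstacle as obtaining a uniform upper bound on the $A$-coordinate over all relevant $(\xi_1,\xi_2,\gamma)$; in practice this uniformity comes from first applying~(3) to reduce to finitely many $\gamma$ modulo the cusp stabilizers, and then handling each Bruhat datum individually --- so (3) feeds into (4) rather than being an independent step.
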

Without loss of generality, we may assume $e \in \Xi$, since otherwise we can replace $\Gamma$ with some conjugate $\xi^{-1} \Gamma \xi$ to move 
$\xi \in \Xi$ to $e$. Therefore we can define the set of cusp points of $\Gamma\setminus \mathbb{H}^n$ in 
the ideal boundary $\partial \mathbb{H}^n = \mathbb{R}^{n-1}\cup \{\infty\}$
to be $\Gamma \setminus \mathbb{H}^n$ to be $\Gamma \Xi \infty \subset \mathbb{R}^{n-1} \cup \{\infty\}$. Our 
assumption that $e \in \Xi$ is equivalent to saying that $\infty$ is a cusp of $\Gamma \setminus \mathbb{H}^n$. 
It is easy to see that $\Gamma x a(t)M$ diverges as
$t \rightarrow \infty$ if $x a(t)P/P \in \partial \mathbb{H}^n$ is a cusp of $\Gamma \setminus \mathbb{H}^n$.
\par Then for any $v \in \mathrm{T}^1 (V) \cong \Gamma \setminus G /M $, we can represent $v$ by $\Gamma x M$ for some $x \in G$, then the 
geodesic flow is just the group action of $A$, to be precise, $g_t (\Gamma x M) = \Gamma x a(t) M$. Note that $M \subset Z(A)$.
\par Then for a product of $k$ such spaces $\Gamma_1 \setminus G / M \times \cdots \times \Gamma_k \setminus G/M$, for the argument above,
we may assume that $\infty$ is a cusp of each $\Gamma_i \setminus \mathbb{H}^n$. Then for a general point 
$(\Gamma_1 x_1 M, \dots, \Gamma_k x_k M) \in \Gamma_1 \setminus G / M \times \cdots \times \Gamma_k \setminus G/M$, if any $\Gamma_i x_i$ has a 
representative in $P =MAN$,we may assume that $x_i\in P$ then $x_i a(t)P/P = \infty$, since $\infty$ is a cusp of $\Gamma_i \setminus \mathbb{H}$,
we have $\Gamma_i x_i a(t) M $ diverges in $\Gamma_i \setminus G / M$ as $t \rightarrow \infty$, 
and thus $(\Gamma_1 x_1 a(t)M, \cdots, \Gamma_k x_k a(t) M)$ 
diverges in the product space as $t \rightarrow \infty$. 
The set of such trajectories has Hausdorff codimension $n-1$. Now we assume every $x_i$ is of form $n_i \sigma p_i$, where $n_i \in N$ and 
$p_i \in P$, Then we have 
$(\Gamma_1 x_1 a(t) M, \dots, \Gamma_k x_k a(t) M) = (\Gamma_1 n_1 \sigma a(t) a(-t) p_1 a(t) M, \dots, \Gamma_k n_k \sigma a(t) a(-t) p_k a(t) M)$.
Since for any $n \in N$, $a(-t) n a(t) \rightarrow e$ as $t\rightarrow \infty$, and $MA \subset Z(A)$, we have for any large $t$, $a(-t)p_i a(t)$ remains in
some compact subset of $G$ depending on $p_i$, therefore, $(\Gamma_1 x_1 a(t) M, \dots, \Gamma_k x_k a(t) M)$ diverges if and only if 
$(\Gamma_1 n_1 \sigma a(t) M, \dots, \Gamma_k n_k \sigma a(t) M)$ diverges, as $t \rightarrow \infty$. Now we focus our attention 
to the geodesics of form $\{ u(\mathbf{x})\sigma a(t): t > 0 \}$ where $u(\mathbf{x}) \in N$ defined as above.
\par Define:
$$\mathfrak{B}_k =\{ (\mathbf{x}_1, \dots , \mathbf{x}_k) \in (\mathbb{R}^{n-1})^k: (\Gamma_1 u(\mathbf{x}_1)\sigma a(t) M, \dots, \Gamma_k u(\mathbf{x}_k)\sigma a(t) M) \text{ diverges as } t \rightarrow \infty \}$$
Then by the argument above and the property of Hausdorff dimension, we have
$$\dim_H \mathfrak{B}_k = \dim_H \mathfrak{D}_k - k \dim(P/M) $$
Since $\dim (P/M) = \dim (NA) = n$, we reduce the original problem to showing the following statement:
\begin{proposition}
\label{main result}
 $$\dim_H \mathfrak{B}_k = k(n-1) - \frac{n-1}{2}.$$
\end{proposition}
\section{Geodesic rays on hyperbolic space and cusp points}
In this section, we will fix a hyperbolic non-compact hyperbolic space $V = \Gamma \setminus \mathbb{H}^n$ with finite volume, and 
discuss properties of geodesic rays on $V$ and cusp points of $\Gamma$ on $\partial \mathbb{H}^n$. Because of the reduction in the 
previous section, we only look at geodesic rays of form $\{\Gamma u(\x) \sigma a(t) M: t >0 \}$.
\par By Theorem \ref{G_R}, a typical fundamental domain $\mathcal{F}$ of the 
action of $\Gamma$ on $\mathbb{H}^n$ is the union of a compact 
subset $\mathcal{K}$ and finitely infinite cusp of form $\gamma \xi_i \Omega(\eta, s_1)/K$, where
$\xi_i$ runs over elements of $\Xi$. We can choose a fixed fundamental domain $\mathcal{F}_0$
such that $K \in \mathcal{F}_0$, and we may change $\Xi$ so that $\mathcal{F}_0$ is union of 
a compact subset and $\xi_i \Omega(\eta, s_1)/K$.
\par By our discussion in the previous section, every cusp point is of form $\gamma \xi \infty$, 
where $\gamma \in \Gamma$, and $\xi \in \Xi$.
\begin{definition}
\label{def_height}
We consider the Bruhat decomposition of $\gamma \xi = u(\x) \sigma u(\y)a(r)m$ where $m \in M$, then we define the height of the cusp point 
$\mathfrak{a} = \gamma\xi \infty$ to be $h(\mathfrak{a}) = e^r$. It is easily seen that $\gamma \xi \in P$ iff and only $\gamma = \xi = e$, 
the corresponding cusp is $\infty$, we define the height of $\infty$ to be $h(\infty) = 1$.
\end{definition}
\begin{remark}
 For $m \in M \cong \mathrm{SO}(n-1)$, and $\mathbf{x} \in \R^{n-1}$, the conjugation
$m u(\x) m^{-2} = u(m \x)$, where the action of $M \cong \mathrm{SO}(n-1)$ on $\R^{n-1}$ is 
the natural action. In particular, it preserves the norm $\|\cdot\|$. 
For a fixed cusp $\mathfrak{a} = \gamma \xi \infty$, the subgroup of $\Gamma$,
denoted by $\Gamma_{\mathfrak{a}}$ is equal to $\gamma \Gamma_{\xi} \gamma^{-1}$, where 
$\Gamma_{\xi} =\Gamma \cap \xi N \xi^{-1}$, which is an unipotent subgroup. So we can replace 
$\gamma\xi \infty$ by $\gamma \xi u(\mathbf{n}) \infty$ where 
$u(\mathbf{n}) \in N \cap \xi^{-1}\Gamma \xi$. Then if $\|\y\| \gg e^r$ or $\|\y\| \ll e^r$, 
we may choose $\mathbf{n}$ appropriately such that $\|\y + e^r m \mathbf{n}\| \asymp e^r$,
then 
$$\begin{array}{rcl}
\gamma \xi u(\mathbf{n}) & = & u(\x) \sigma u(\y) a(r) m u(\mathbf{n}) \\
                         & = & u(\x)\sigma u(\y + e^r m \mathbf{n}) a(r) m .
     \end{array}
$$
So we can assume that $\|\y\| \asymp e^r$.
\end{remark}
\par Since $\mathbb{H}^n = G/K$, by Iwasawa decomposition, $ G/K = NAK/K \cong NA$. We will use the coordinate system in
~\cite{EKP}, denoting $u(\mathbf{x}) a(t)K/K$ by $(e^t, \x)$. We call it the $NA$-coordinate 
system.
\par In ~\cite{EKP}, the action of Weyl group element $\sigma$ on $G/K$ is given as follows:
\begin{equation}
 \label{weyl_group_action}
 \sigma (e^t, \mathbf{x})= \sigma u(\mathbf{x}) a(t)K/K = \frac{1}{e^{2t} + \|\mathbf{x}\|^2} (e^t, -\mathbf{x})
\end{equation}
\par We consider a typical geodesic ray $\mathcal{G}_{\mathbf{x}}= 
\{\mathcal{G}_{\mathbf{x}}(t): t > 0\}$(where 
$\mathbf{x} \in \R^{n-1}$) as follows:
$$\mathcal{G}_{\mathbf{x}}(t) = u(\mathbf{x})\sigma a(t).$$
\begin{definition}
\label{def_near_cusp_and_spec}
Take $s_1 > s_0 >0$ as in Theorem \ref{G_R}, we say $\mathcal{G}_{\mathbf{x}}$ enters (or is near)
the cusp $\mathfrak{a}= \gamma\xi \infty$ at $t$ if 
$$u(\mathbf{x})\sigma a(t) \in \gamma \xi N A_{s_1} K.$$
We say $\mathcal{G}_{\mathbf{x}}$ enters (or is near) the cusp $\mathfrak{a}= \gamma\xi \infty$,
if it enters (or is near) $\mathfrak{a}$ at $t$ for some $t >0$.
\par For $\x \in \R^{n-1}$, we define the spectrum of $\x$, denoted by $\mathrm{Spec}(\x)$ to be the 
sequence $\{\mathfrak{a}_i: i \in \mathbb{N}\}$ of cusp points $\mathcal{G}_{\x}$ enters, ordered by 
the time $t_i$ at which $\mathcal{G}_{\x}$ enters $\mathfrak{a}_i$. 
\end{definition}
\par Concerning the height of cusp point and geodesic rays near the cusp, we have the following 
proposition similar to rational convergents of real numbers:
\begin{proposition}
\label{condition_enter_cusp}
 If $\mathcal{G}_{\mathbf{x}}$ enters a cusp $\mathfrak{a}= \gamma \xi \infty$, then 
 $$\|\mathbf{x} - \mathfrak{a}\| \ll \frac{1}{h(\mathfrak{a})}.$$
 Here $\|\cdot\|$ denotes the Euclidean norm on $\R^{n-1}$. Conversely, there exists a
 constant $c >0$ such that if 
 $$\|\mathbf{x}- \mathfrak{a}\| \leq \frac{c}{h(\mathfrak{a})},$$
 then $\mathcal{G}_{\mathbf{x}}$ enters $\mathfrak{a}$.
\end{proposition}

\begin{remark} In the case $n=2$ and $\Gamma = \mathrm{SL}(2,\Z)$, the cusp points are 
rationals, and the height of a reduced rational $\frac{p}{q}$ ($(p,q)=1$) is $q^2$, and 
for $x \in \R$, 
a geodesic ray $\mathcal{G}_{x}$ enters $\frac{p}{q}$ if and only if 
$\frac{p}{q}$ is a convergent of $x$, which is equivalent to the above inequality
holds. Thus the above proposition extends this approximations of rationals
to any dimension $n \geq 2$ 
and any lattice $\Gamma < \mathrm{SO}(n,1)$.
\end{remark}
\begin{proof}
 Suppose at $t_0$, $\mathcal{G}_{\mathbf{x}}$ enters $\mathfrak{a} = \gamma\xi \infty$, then
 $u(\mathbf{x})\sigma a(t_0) =\gamma\xi n a(s)k$, where $e^{s} \asymp e^{s_0} \asymp 1$. Write
 $\gamma \xi =u(\mathbf{x}_1) \sigma u(\mathbf{x}_2) a(r)m$ in the Bruhat decomposition.
 Then 
 $$u(\mathbf{x})\sigma a(t_0) = u(\mathbf{x}_1)\sigma u(\mathbf{x}_2)a(r+s) k',$$
 where $k'=km \in K$. Rewrite it as follows:
 $$a(-r)u(-\mathbf{x}_2)\sigma u(\mathbf{x} - \mathbf{x}_1)\sigma a(t_0) = a(s) k'$$
 Put both sides into the $NA$-coordinate system, then
 $$\mathrm{LHS} = 
 \left(\frac{e^{-t_0 - r}}{e^{-2t_0} +\|\mathbf{x} - \mathbf{x}_1\|^2}, 
 e^{-r}\left( \frac{\mathbf{x}_1 - \mathbf{x}}{e^{-2t_0} +\|\mathbf{x}- \mathbf{x}_1\|^2} - \mathbf{x}_2\right)\right),$$
 and
 $$\mathrm{RHS} = (e^s, \mathbf{0}).$$
 Then $1 \asymp e^s = \frac{e^{-t_0 - r}}{e^{-2t_0} +\|\mathbf{x} - \mathbf{x}_1\|^2}$.
 We denote 
 $$
 \begin{array}{rcl }
 f(t_0) & =  & \frac{e^{-t_0 - r}}{e^{-2t_0} +\|\mathbf{x} - \mathbf{x}_1\|^2} \\
        & =  & \frac{e^{-r}}{e^{-t_0} + e^{t_0}\|\mathbf{x}- \mathbf{x}_1\|^2},
 \end{array}
$$
$f(t_0)$ is increasing when $t_0 \leq \|\mathbf{x} - \mathbf{x}_1\|^{-1}$ and decreasing 
when $t_0 > \|\mathbf{x} - \mathbf{x}_1\|^{-1}$. The maximum of $f(t_0)$ is equal to
$\frac{e^{-r}}{2\|\mathbf{x} - \mathbf{x}_1\|}$, so $1 \asymp f(t_0)$ implies that 
$$\frac{e^{-r}}{2\|\mathbf{x} - \mathbf{x}_1\|} \gg 1.$$
Note that $h(\mathfrak{a}) = e^r$ and $\mathfrak{a} = \mathbf{x}_1$ (because 
$\mathfrak{a} = \gamma \xi \infty \in u(\mathbf{x}_1) \sigma P $), then the above
inequality completes the first part of the proof.
\par Conversely, if the maximum of $f(t_0)$, say $\frac{e^r}{2\|\mathbf{x}- \mathbf{x}_1\|}$ 
is large enough, we could make $e^s \geq e^{s_1}$ for some $t_0$, in particular, 
$\mathcal{G}_{\mathbf{x}}(t_0)$ is near $\mathfrak{a}$. This proves the 
second part.
\end{proof}
\begin{remark} 
From the proof above, we note that the function $f(t_0)$ describes how deep a 
geodesic ray $\mathcal{G}_{\mathbf{x}}(t_0)$ enters into a cusp $\mathfrak{a}$:
when $f(t_0)$ is large the geodesic ray is near the cusp.
\end{remark}
\begin{definition}
\label{def_depth_function}
 Given a geodesic ray $\mathcal{G}_{\mathbf{x}}$ and a cusp point $\mathfrak{a}$, 
 we define the depth function $f_{\mathfrak{a}}(t, \mathbf{x})$ with 
 respect to $\mathfrak{a}$ as follows:
 $$f_{\mathfrak{a}}(t, \mathbf{x}) = \frac{1}{h(\mathfrak{a})(e^{-t} + e^{t}\|\mathbf{x}- \mathfrak{a}\|^2)}.$$
We define the total depth function $W(t,\mathbf{x})$ as follows:
$$W(t,\mathbf{x}) = \max_{\mathfrak{a} \in \Gamma\Xi\infty} \{f_{\mathfrak{a}} (t,\mathbf{x})\}.$$
\end{definition}
\begin{corollary}
 \label{time_enter_leave_cusp}
 Given $\x \in \R^{n-1}$ and a cusp point $\mathfrak{a}$ such that
 $\|\x - \mathfrak{a}\| h(\mathfrak{a})$ is small enough, then the time
 $t_1$ when $\mathcal{G}_{\x}$ enters $\mathfrak{a}$ satisfies:
 $$e^{t_1} \asymp h(\mathfrak{a}),$$
 the time $\tau$ when $f_{\mathfrak{a}}(t)$ has its maximum satisfies:
 $$e^{\tau} = \frac{1}{\|\x - \mathfrak{a}\|},$$
 and the time $t_2$ when $\mathcal{G}_{\x}$ leaves $\mathfrak{a}$ satisfies:
 $$e^{t_2} \asymp \frac{1}{h(\mathfrak{a}) \|\x - \mathfrak{a}\|^2}.$$
\end{corollary}

\begin{proof}
 We consider the function $f(t) = f_{\mathfrak{a}}(t, \x)$. It admits its unique maximum 
 $\frac{1}{h(\mathfrak{a}) \|\x - \mathfrak{a}\|}$ at 
 $t = - \log (\|\x - \mathfrak{a}\|) := \tau$. This proves the second equation.
\par When $t< \tau$, since $e^{-t} > e^t \|\x - \mathfrak{a}\|^2$, 
$$f(t) \asymp \frac{1}{h(\mathfrak{a}) e^{-t_0}}.$$ 
Suppose at $t = t_1$, $\mathcal{G}_{\x}$ enters $\mathfrak{a}$, then $t_1 < \tau$, and 
 $f(t_1) \asymp 1$, this shows that 
 $$e^{t_1} \asymp h(\mathfrak{a}).$$
 \par When $t > \tau$, since $e^{-t} < e^t \|\x - \mathfrak{a}\|^2$, 
 $$f(t)\asymp \frac{1}{h(\mathfrak{a})\|\x - \mathfrak{a}\|^2 e^t}.$$
 Suppose at $t = t_2$, $\mathcal{G}_{\x}$ leaves $\mathfrak{a}$, then $t_2 > \tau$, and 
 $f(t) \asymp 1$, this shows that 
 $$e^{t_2} \asymp \frac{1}{h(\mathfrak{a}) \|\x - \mathfrak{a}\|^2}.$$
 \par This completes the proof.

\end{proof}
\begin{remark} For $\x \in \R^{n-1}$, we have defined its spectrum 
$$\mathrm{Spec}(\x) = \{\mathfrak{a}(i, \x) : i \in \mathbb{N}\},$$
we denote by $t_1(i,\x)$, $\tau(i,\x)$ and $t_2(i,\x)$ the times when 
$\mathcal{G}_{\x}$ enters $\mathfrak{a}(i,\x)$, when 
$f_{\mathfrak{a}(i,\x)}(t,\x)$ admits its maximum, and when $\mathcal{G}_{\x}$
leaves $\mathfrak{a}(i,\x)$, respectively. Then according the above proposition,
$$e^{t_1(i,\x)} \asymp h(\mathfrak{a}(i,\x)),$$
$$e^{\tau(i,\x)} = \frac{1}{\|\x - \mathfrak{a}(i,\x)\|} ,$$
and 
$$e^{t_2(i,\x)} \asymp \frac{1}{h(\mathfrak{a}(i,\x))\|\x - \mathfrak{a}(i,\x)\|^2}.$$
\end{remark}
\par When a geodesic ray $\mathcal{G}_{\x}$ is in the compact part of some fundamental domain, we can still 
associate it to a cusp point of this fundamental domain, under this condition, we say the geodesic ray is 
roughly near the cusp point:
\begin{definition}
 \label{rough_enter_cusp_rough_spec}
 We can enlarge the cusp parts of $\Gamma \setminus \mathbb{H}^n$ such that their union
covers the whole space. Then in the space $\mathbb{H}^n$, we can divide each fundamental 
domain into several enlarged cusp parts such that they cover the whole $\mathbb{H}^n$.
 For $\x \in \R^{n-1}$, we say that the geodesic ray $\mathcal{G}_{\x}$ roughly enters a cusp
 $\mathfrak{a}$ (or is roughly near the cusp $\mathfrak{a}$) at time $t$ if 
 $\mathcal{G}_{\x}(t)$ is inside the enlarged cusp part associated with $\mathfrak{a}$. In
 this sense, we can define the rough spectrum of $\x$ as the sequence of cusps that 
 $\mathcal{G}_{\x}$ roughly enters.
\end{definition}
\begin{remark} Using the same argument as the proof of Proposition \ref{condition_enter_cusp}, it
is easily shown that:
\end{remark}
\begin{corollary}
\label{condition_roughly_enter_cusp}
if $\mathcal{G}_{\x}$ roughly enters a cusp point $\mathfrak{a}$, then
$$\|\x - \mathfrak{a}\| \ll \frac{1}{h(\mathfrak{a})}.$$
\end{corollary}
Moreover, following the argument in the proof of Corollary \ref{time_enter_leave_cusp}, we
can deduce the same result concerning the time $\mathcal{G}_{\x}$ roughly enters and leaves a 
cusp point:
\begin{corollary}
 \label{time_roughly_enter_leave_cusp}
 For $\x \in \R^{n-1}$, suppose 
 the associated geodesic ray $\mathcal{G}_{\x}$ roughly enters a cusp point $\mathfrak{a}$, then 
 the time $t_1$ when $\mathcal{G}_{\x}$ roughly enters $\mathfrak{a}$ and the time
 $t_2$ when $\mathcal{G}_{\x}$ roughly leaves $\mathfrak{a}$ satisfy
 $$e^{t_1} \asymp h(\mathfrak{a}),$$
 and 
 $$e^{t_2} \asymp \frac{1}{h(\mathfrak{a})\|\x - \mathfrak{a}\|^2},$$
 respectively.
\end{corollary}

\par Now we will define an alternative metric $d_{o}(\cdot, \cdot)$ (with respect to a 
base point $o \in \mathcal{F}_0$) and 
the height of a cusp point $\tilde{h}(\mathfrak{a})$, and prove that in a fixed ball $B \subset \R^{n-1}$,
$h(\mathfrak{a}) \asymp \tilde{h}(\mathfrak{a})$ and 
$\|\mathbf{x} - \mathbf{y}\| \asymp d_o (\mathbf{x}, \mathbf{y})$. These definitions naturally
come from the 
geometric structure of hyperbolic spaces and can be generalized to any Riemannian manifold with 
negative curvature. 
\par We firstly recall the basic theory of Busemann function.
\par For $\xi \in \partial \mathbb{H}^n$
and $x, y \in \mathbb{H}^n$, $B_{\xi} (x, y)$ is defined as follows:
$$B_{\xi} (x,y) =\lim_{t \rightarrow \infty} d_{\mathbb{H}} (x, \xi(t)) - d_{\mathbb{H}} (y, \xi(t)),$$ 
where $d_{\mathbb{H}}$ denotes the hyperbolic distance in $\mathbb{H}^n$ and $\{\xi(t)\}_{t \geq 0}$ is any geodesic ray pointing to $\xi$. 
It turns out that the value is independent of the choice of the ray $\{\xi(t)\}$.
\par For $B_{\xi} (x, y)$ we have the following basic properties:
\begin{enumerate}
 \item Let $\{\xi(t)\}_{t\geq 0}$ be a geodesic ray pointing to $\xi$, then we have $B_{\xi}(\xi(t_1), \xi(t_2)) = t_2 -t_1$ for any nonnegative numbers $t_1, t_2$.
 \item If $y = n_{\xi} x$, for some $n_{\xi} \in \mathcal{U}(\xi) =\{g \in G: g \text{ is unipotent and }g \xi =\xi\}$, then $B_{\xi} (x, y) =0$.
 \item $B_{\xi} (x, y) + B_{\xi} (y,z) = B_{\xi} (x,z)$, $B_{\xi} (x,y) = - B_{\xi} (y,x)$.
 \item For any $g \in G$ we have $B_{g \xi}(g x, g y) = B_{\xi}(x,y)$.
\end{enumerate}
\begin{definition}
 \label{def_tilde_height}
 Fix $o = K \in \mathcal{F}_0$. For a cusp $\mathfrak{a} = \gamma \xi \infty$,
 $$\tilde{h}(\mathfrak{a}) =\exp( B_{\xi \infty} (\gamma^{-1} o , o) ).$$ 
\end{definition}

\begin{definition}
 \label{def_gromov_distance}
  For two points $\xi_1, \xi_2 \in \partial \mathbb{H}^n$, 
  and any point $x \in \mathbb{H}^n$, 
  we define the Gromov metric between $\xi_1$ and
  $\xi_2$ respect to $x$ (denoted by $d_{x}(\xi_1, \xi_2)$) as follows:
  $$d_{x} (\xi_1, \xi_2) = \exp \left(-\frac{1}{2}\left(\lim_{t\rightarrow \infty} B_{\xi_1} (x, \xi(t)) + B_{\xi_2} (x, \xi (t))\right)\right),$$
  where $\{\xi(t): t \geq 0\}$ denotes any geodesic ray pointing to any point $\xi$ at infinity. The value is independent of the choice of
  $\{\xi(t) : t \geq 0\}$.
\end{definition}
\begin{remark} An important property of the Gromov metric is that it is uniformly bounded, namely, there exists a constant $M >0$ such that 
$$d_o(\xi_1, \xi_2) \leq M,$$
for all $\xi_1 , \xi_2 \in \partial \mathbb{H}^n$.
\end{remark}
\begin{proposition}
\label{equivalent_two_def}
Given a fixed compact subset $B \subset \R^{n-1}$, for any 
$\mathbf{x}, \mathbf{y} \in B$,
$$d_o(\mathbf{x}, \mathbf{y}) \asymp \|\mathbf{x}-\mathbf{y}\|,$$
and for any cusp point $\mathfrak{a} \in B$,
$$\tilde{h}(\mathfrak{a}) \asymp h(\mathfrak{a}).$$
\end{proposition}
\begin{proof}
For $\mathbf{x}, \mathbf{y} \in B$, choose $k, k' \in K$ such that 
$k a(t) \rightarrow mathbf{x}$ and $k' a(t) \rightarrow \mathbf{y}$ as 
$t \rightarrow \infty$. We could choose $k = u(\mathbf{x})\sigma n_1 a(t_1)$
and $k' = u(\mathbf{y})\sigma n_2 a(t_2)$. Then from $\mathbf{x}, \mathbf{y} \in B$,
it is easily seen that $n_1, n_2$ and $a(t_1), a(t_2)$ are uniformly bounded, in particular,
$e^{t_1} \asymp e^{t_2} \asymp 1$. Choose the geodesic ray 
$\{u(\mathbf{x})\sigma n_1 a(t_0) a(t): t \geq 0\}$. It is easily seen that 
$B_{\mathbf{x}} ( o,k a(t)K ) = t$. To find $d_{o} (\mathbf{x}, \mathbf{y})$ 
we also need to find $B_{\mathbf{y}}(o, k a(t)K/K)$. To do this, we need to find 
$s \in \R$ such that
$k a(t)K/K = k' n a(s)K/K$ for some $n \in N$ (this $s$ is exactly 
$B_{\mathbf{y}} (o, k a(t)K/K)$).
$$\begin{array}{l}
u(\mathbf{x}) \sigma n_1 a(t_0) a(t) = u(\mathbf{y}) \sigma n_2 n a(t_2) a(s) k" \\
u(\mathbf{x} - \mathbf{y}) \sigma u(\mathbf{x}_1) a(t+t_0) = \sigma u(\mathbf{z}) a(s+ t_2) k",
\end{array}
$$ 

for some $k" \in K$. Here $u(\mathbf{x}_1) = n_1$ and $u(\mathbf{z}) = n_2 n$.
\par  Compare the $NA$-coordinates of the two sides of the above equality, we have 
$$\mathrm{LHS} = \left(\frac{e^{t+ t_0}}{(e^{t+t_0})^2 + \|\mathbf{x}_1\|^2}, 
(\mathbf{x} - \mathbf{y}) - \frac{\mathbf{x}_1}{(e^{t+t_0})^2 + \|\mathbf{x}_1\|^2}\right)$$
$$\mathrm{RHS} = \left(\frac{e^{s+t_2}}{(e^{s+t_2})^2 + \|\mathbf{z}\|^2} , 
- \frac{x}{(e^{s+t_2})^2 + \|\mathbf{z}\|^2}\right)$$
Since $\|\mathbf{x}_1\|$ is uniformly bounded, when $t$ is large enough, 
we have $\frac{e^{t+ t_0}}{(e^{t+t_0})^2 + \|\mathbf{x}_1\|^2}$ is very close to
$\frac{1}{e^{t+t_0}}$ and
$ \frac{\mathbf{x}_1}{(e^{t+t_0})^2 + \|\mathbf{x}_1\|^2})$ 
is very small, which means $(\mathbf{x} - \mathbf{y}) - 
\frac{\mathbf{x}_1}{(e^{t+t_0})^2 + \|\mathbf{x}_1\|^2})$ 
is very close to
$\mathbf{x} - \mathbf{y}$. Therefore we have 
$\frac{e^{s+t_2}}{(e^{s+t_2})^2 + \|\mathbf{z}\|^2} \asymp \frac{1}{e^{t+t_0}}$ and 
$\frac{\|\mathbf{z}\|}{(e^{s+t_2})^2 + \|\mathbf{z}\|^2} \asymp \|\mathbf{x} - \mathbf{y}\|$, 
taking
the square sum of the above two estimates we have 
$$\frac{1}{(e^{s+t_2})^2 + \|\mathbf{z}\|^2} 
\asymp (\frac{1}{e^{t+t_0}})^2 + \|\mathbf{x} - \mathbf{y}\|^2 \asymp 
\|\mathbf{x} - \mathbf{y}\|^2$$
given $t$ large enough.
Therefore we have 
$$e^{s+t} \asymp e^{s+t+t_0 +t_2} \asymp (e^{s+t_2})^2 + \|\mathbf{z}\|^2 
\asymp \frac{1}{\|\mathbf{x} - \mathbf{y}\|^2}$$
This gives that
$$\exp\left(-\frac{1}{2}\lim_{t\rightarrow \infty}\left( B_{\mathbf{x}} (o, k a(t)K/K) 
+ B_{\mathbf{y}} (o, k a(t)K/K)\right) \right) \asymp \|\mathbf{x} -\mathbf{y}\|.$$
This completes the first part of the proposition.
\par To compute $\tilde{h}(\gamma \xi \infty)$, we choose $k \in K$ such that $\gamma^{-1} k a(t) \rightarrow \xi \infty$ as 
$t \rightarrow \infty$, and $k' \in K$ such that $k' a(s) \rightarrow \xi \infty$ as $s \rightarrow \infty$. Obviously, $k'$ corresponding to the 
vector $v_{\xi} \in T_{o}^1 \mathbb{H}^n $ pointing to $\xi \infty$, this means that $k'$ can only be chosen within a fixed finite subset 
of $K$, depending only on the subgroup $\Gamma$. And then there exists some $t$ such that $\gamma^{-1} k a(t) = \xi n \xi^{-1} k'$, this $t$ will be 
$\log \tilde{h} (\gamma\xi \infty)$ from the properties of Busemann function (note that $ \gamma^{-1}k a(t) = g_t(\gamma^{-1} k)$).
Now, $\gamma^{-1} k a(t) \rightarrow \xi \infty$ means that $\gamma^{-1} k  \in \xi P$ which implies $k \in \gamma \xi P$. For the same reason,
$k' \in \xi P$ which means $\xi^{-1} k' \in P$.
\par Let $\gamma \xi = u(\mathbf{x}_1) \sigma u(\mathbf{x}_2)a(r) m$, then from above we have $k= u(\mathbf{x}_1) \sigma n_k a(t_k) m_k$ for some 
$n \in N$, $m_k \in M$ and $a(t_k) \in A$. From our assumption, $\mathbf{x}_1 \in B$, this shows that $n_k \in N$ and $a(t_k)$ are both bounded by some
fixed compact subsets (depending on $K$ and $B$) of $N$ and $A$ respectively, since $k \in K$ and $u(\mathbf{x}_1) \in u(B)$ are both bounded inside some
compact subset $K$ and $u(B)$ respectively. In particular, $t_k \in [-C, C]$ for some constant $C > 0$ depending on $K$ and $B$. Also, from
$\xi^{-1}k' \in P$ we have $\xi^{-1}k' = n' a(t_{k'}) m' $, it is clear that $\xi^{-1} k'$ is contained some fixed finite subset, which implies 
$t_{k'} \in [-C' , C']$ for some absolute constant depending only on $\Gamma$.
\par Then from $k a(t) = \gamma \xi n \xi^{-1} k'$, we have 
$$u(\mathbf{x}_1) \sigma n_k a(t_k) m_k a(t) = u(\mathbf{x}_1) \sigma u(\mathbf{x}_2) a(r) m n n' a(t_{k'}) m' $$
\par In the Bruhat decomposition $g=n_1 \sigma n_2 a m$, compare the $a$-component of the left and right side of the above equation, we can
get $t = r + r_{k'} -r_k$, given that $r_k$ and $r_{k'}$ are both bounded from above and below, we have that
$$e^t \asymp e^r.$$
Since $e^t = \tilde{h}(\gamma \xi \infty)$ and $e^r = h(\gamma \xi \infty)$, this proves the second part of the proposition.
\end{proof}
Concerning $\tilde{h}(\cdot)$ and $d_o(\cdot,\cdot)$, we have the following interesting equality:
\begin{proposition}
 \label{invariant_under_group_action_height_metric}
 For any $\gamma, \gamma_1, \gamma_2 \in \Gamma$ and $\xi_1, \xi_2 \in \Xi$,
 $$d_o^2 (\gamma_1 \xi_1 \infty, \gamma_2 \xi_2 \infty)\tilde{h}(\gamma_1 \xi_1 \infty) \tilde{h}(\gamma_2 \xi_2 \infty)= d_o^2 (\gamma\gamma_1 \xi_1 \infty, \gamma \gamma_2 \xi_2 \infty) \tilde{h}(\gamma \gamma_1 \xi_1 \infty) \tilde{h}(\gamma\gamma_2 \xi_2 \infty)$$
\end{proposition}
\begin{proof}
 Let $\{\xi_1(t): t \geq 0\}$ be a geodesic ray pointing to $\xi_1\infty$, then
 \begin{equation}
  \begin{array}{cl}
    & B_{\gamma_1 \xi_1 \infty}(o, \gamma_1 o) + B_{\gamma_2 \xi_2\infty} (o, \gamma_2 o) - B_{\gamma_1 \xi_1 \infty}(o, \gamma_1 \xi_1(t)) - B_{\gamma_2\xi_2\infty}(o, \gamma_1\xi_1(t))\\
   = & (B_{\gamma_1 \xi_1 \infty}(o, \gamma_1 o)-B_{\gamma_1 \xi_1 \infty}(o, \gamma_1 \xi_1(t))) + (B_{\gamma_2 \xi_2\infty} (o, \gamma_2 o)- B_{\gamma_2\xi_2\infty}(o, \gamma_1\xi_1(t)))\\
   = & B_{\gamma_1 \xi_1 \infty}(\gamma_1 \xi_1 (t), \gamma_1 o) +B_{\gamma_2 \xi_2 \infty}(\gamma_1 \xi_1 (t), \gamma_2 o) \\
    & \text{(from the basic properties of Busemann function)} \\
    = & B_{\gamma\gamma_1 \xi_1 \infty}(\gamma\gamma_1 \xi_1 (t),\gamma \gamma_1 o) +B_{\gamma\gamma_2 \xi_2 \infty}(\gamma\gamma_1 \xi_1 (t), \gamma\gamma_2 o)\\
     & \text{(also from basic properties)}\\
   = & (B_{\gamma\gamma_1 \xi_1 \infty}(o, \gamma\gamma_1 o)-B_{\gamma\gamma_1 \xi_1 \infty}(o,\gamma \gamma_1 \xi_1(t))) + (B_{\gamma\gamma_2 \xi_2\infty} (o, \gamma\gamma_2 o)- B_{\gamma\gamma_2\xi_2\infty}(o, \gamma\gamma_1\xi_1(t)))\\
   = & B_{\gamma\gamma_1 \xi_1 \infty}(o,\gamma \gamma_1 o) + B_{\gamma\gamma_2 \xi_2\infty} (o, \gamma\gamma_2 o) - B_{\gamma\gamma_1 \xi_1 \infty}(o, \gamma\gamma_1 \xi_1(t)) - B_{\gamma\gamma_2\xi_2\infty}(o, \gamma\gamma_1\xi_1(t))
  \end{array}
 \end{equation}
By taking the limits of the first and the last expressions in the equations above and applying exponential function, we prove the statement. 
\end{proof}

\begin{corollary}
 \label{distance_between_two_cusps}
 For any two cusps $\gamma_1 \xi_1 \infty$ and $\gamma_2\xi_2 \infty$ inside $B$, we have
 $$\|\gamma_1\xi_1 \infty - \gamma_2\xi_2 \infty\| \gg \frac{1}{\sqrt{h(\gamma_1\xi_1\infty)h(\gamma_2\xi_2\infty)}}.$$
\end{corollary}
\begin{proof}
We at first prove the above inequality for $\tilde{h}(\cdot)$ and $d_o(\cdot,\cdot)$.
\par 
We at first consider the case $\gamma_1 = \gamma$ and $\gamma_2 = \mathrm{id}$. Choose a particular geodesic ray pointing to $\gamma \xi_1 \infty$, say $\{\gamma \xi_1 a(t): t \geq 0\}$, suppose we have 
 $\gamma \xi_1 = \xi_2 n_1 a(-r) k$ in the representation $G = \xi_2 N A K$, by the Theorem \ref{G_R}, we have $r$ is bounded 
 from below by some absolute constant,
 then we have 
 $$
 \begin{array}{c l}
  & B_{\xi_2\infty} (o, \gamma \xi_1 a(t)) \\
  = & B_{\xi_2\infty} (o, \xi_2 n_1 a(-r) k a(t))  
 \end{array}
 $$
 Projecting $n_1 a(-r) k a(t)$ onto $G/K$ and consider the first component of the $NA$-coordinate (put $k = n_1\sigma  n_2 a m$ in 
 Bruhat decomposition and do the same calculation as we did in the proof of Proposition \ref{equivalent_two_def}), 
 we have that it is equal to $\frac{e^{t-r}}{e^{2t}+\|\mathbf{y}\|^2}$
 for some vector $\mathbf{y} \in \mathbb{R}^{n-1}$, it is clear that it is at most $\frac{e^{t-r}}{e^{2t}} = e^{-t-r}$. Then we have 
 $$\xi_2 n_1 a(-r) k a(t) = \xi_2 n' a(-r-t - \epsilon(t)) k$$ for some $\epsilon(t) \geq 0$. This means that 
 $B_{\xi_2\infty} (o, \gamma \xi_1 a(t)) = -r-t -\epsilon(t) + t_1$, where $t_1$ is such that $\xi_2 = k_2 a(t_2) n_2$.
 \par For $B_{\gamma \xi_1 \infty} (o, \gamma \xi_1 a(t))$, we have that
 $$
 \begin{array}{cl}
 & B_{\gamma \xi_1 \infty} (o, \gamma \xi_1 a(t)) \\
 = & B_{\gamma \xi_1 \infty} (o, \gamma \xi_1) + B_{\gamma \xi_1 \infty} (\gamma \xi_1, \gamma \xi_1 a(t)) \\
  = & B_{\gamma \xi_1 \infty}(o, \gamma o) +B_{\gamma \xi_1 \infty} (\gamma o, \gamma \xi_1) +t \\
  = & B_{\gamma \xi_1 \infty}(o, \gamma o) + B_{\xi_1 \infty}(o, \xi_1) +t 
 \end{array}
 $$
 We denote $B_{\xi_1 \infty} (o, \xi_1)$ by $C(\xi_1)$, since it is contained in a fixed finite set, it is uniformly bounded.
\par Thus, we have 
$$
\begin{array}{cl}
 & B_{\gamma \xi_1 \infty}( o, \gamma o) - B_{\xi_2 \infty}(o, \gamma \xi_1 a(t)) - B_{\gamma\xi_1 \infty}(o, \gamma \xi_1 a(t)) \\
 = & B_{\gamma \xi_1 \infty}( o, \gamma o) - B_{\gamma \xi_1 \infty}(o, \gamma o) -C(\xi_1) -t +t +r + \epsilon(t) -t_1 \\
 = & r+ \epsilon (t) -t_1 -C(\xi_1) \\
 \geq & \tilde{C}.
\end{array}
$$
for some absolute constant $\tilde{C}$. By letting $t \rightarrow \infty$ and taking the exponential in the above inequality, we have
$$d^2_o(\gamma \xi_1 \infty, \xi_2 \infty) \tilde{h}(\gamma  \xi_1 \infty ) \gg 1.$$
This proves the statement since $\tilde{h}(\xi_2 \infty) =1$.
\par 
Now we consider the general case. For any two cusps $\gamma_1 \xi_1 \infty$ and $\gamma_2 \xi_2 \infty$, we have 
$$
\begin{array}{cl}
 & \tilde{h}(\gamma_1 \xi_1 \infty) \tilde{h}(\gamma_2\xi_2 \infty) d_o^2(\gamma_1\xi_1\infty, \gamma_2\xi_2\infty) \\
 = & \tilde{h}(\gamma_2^{-1}\gamma_1 \xi_1 \infty) \tilde{h}(\xi_2\infty) d_o^2(\gamma_2^{-1}\gamma_1 \xi_1 , \xi_2 \infty) \\
 \gg & 1.
\end{array}
$$
\par 
Now applying the facts that $h(\gamma\xi\infty) \asymp \tilde{h} (\gamma\xi\infty)$ and $d_o(\gamma_1\xi_1\infty, \gamma_2\xi_2\infty) \asymp \|\gamma_1\xi_1\infty - \gamma_2 \xi_2 \infty\|$
whenever the cusps are in $B$, we prove the inequality for $h(\cdot)$ and $\|\cdot\|$.
\end{proof}
\begin{definition}
\label{def_successive_cusp}
For a cusp $\mathfrak{a} = \gamma \xi \infty$, we call another cusp $\mathfrak{b}$ is a successive cusp of $\mathfrak{a}$ if
$$\mathfrak{b} = \gamma (\xi u(\mathbf{n}) \xi^{-1}) s \xi' \infty,$$ where $\mathbf{n}\in \R^{n-1}$,
$\xi u(\mathbf{n}) \xi^{-1} \in \Gamma \cap \xi N \xi^{-1} := \Gamma_{\xi}$ (it is a cocompact lattice of $\xi N \xi^{-1}$) 
with the norm $\|\mathbf{n}\|$ large enough, and $s \not\in \Gamma_{\xi}$ is chosen from a finite subset $\mathcal{S} \subset \Gamma$ defined as follows: for any fundamental domain $\mathcal{F}$ 
that shares a common boundary with $\mathcal{F}_0$ is of form $s \mathcal{F}_0$, there are only finitely many such fundamental domains, 
we define $\mathcal{S}$ to be the collection of all the possible $s$'s. It is easily seen that $\Gamma$ is generated by $\mathcal{S}$.
\end{definition}

\begin{proposition}
\label{height_distance_of_successive_cusp}
Given a cusp $\mathfrak{a} = \gamma \xi_1 \infty$ and one of its 
successive cusp points $\mathfrak{b} = \gamma \xi_1 u(\mathbf{n}) \xi_1^{-1} s \xi_2 \infty$, the following approximation is true:
$$h(\mathfrak{b}) \asymp h(\mathfrak{a})\|\mathbf{n}\|^2.$$
And moreover, 
$$\|\mathfrak{a} - \mathfrak{b}\| \asymp \frac{1}{\sqrt{h(\mathfrak{a}) h(\mathfrak{b})}}$$
\end{proposition}
\begin{proof}
 Let $\gamma \xi_1 = u(\mathbf{x}_1)\sigma u(\mathbf{y}_1) a(r_1) m_1$ and
$\gamma \xi_1 u(\mathbf{n}) \xi_1^{-1} s \xi_2= u(\mathbf{x}_2) \sigma u(\mathbf{y}_2) a(r_2) m_2$, we take the inverse of both sides, then the left hand side becomes 
$$(\xi_1^{-1} s \xi_2)^{-1} u(-\mathbf{n}) (\gamma \xi_1)^{-1} = k^{-1}(\xi_1, \xi_2, s) u(-\mathbf{n}) m_1^{-1} a(-r_1) u(-\mathbf{y}_1) \sigma u(-\mathbf{x}_1)$$
where $k(\xi_1, \xi_2, s)$ denotes $\xi_1^{-1} s \xi_2$ for $\xi_1, \xi_2 \in \Xi$ and $s\in \mathcal{S}$. we claim that $k(\xi_1, \xi_2 ,s) \not\in P$ unless $s =e$ and
$\xi_1 = \xi_2$. This is because if this happens then $\xi_1 \infty = s \xi_2 \infty$, this means that they represent exact the same cusp.
\par We denote $k(\xi_1, \xi_2, s) = u(\mathbf{x}(\xi_1, \xi_2, s)) \sigma a(r(\xi_1, \xi_2, s)) u(\mathbf{y}(\xi_1,\xi_2, s)) m(\xi_1,\xi_2,s)$, since we only have finitely many choices for
$\xi_1$, $\xi_2$ and $s$, we have $\mathbf{x}(\xi_1, \xi_2,s)$, $r(\xi_1, \xi_2, s)$ and $\mathbf{y}(\xi_1, \xi_2, s)$ are all bounded. This makes the left hand side equal
$$m^{-1}(\xi_1, \xi_2,s)u(-\mathbf{y}(\xi_1, \xi_2, s)) a(-r(\xi_1 , \xi_2, s)) \sigma u(-\mathbf{x}(\xi_1, \xi_2, s)) u(-\mathbf{n}) m_1^{-1} a(-r_1) u(-\mathbf{y}_1) \sigma u(-\mathbf{x}_1)$$
The right hand side is equal to
$$m_2^{-1} a(-r_2) u(-\mathbf{y}_2) \sigma u(-\mathbf{x}_2)$$
\par Now we consider their $NA$-coordinates on $G/K$, the right hand side has coordinate
$$\left(\frac{e^{-r_2}}{1+\|\mathbf{x}_2\|^2}, e^{-r_2}m_2^{-1}\left(\frac{\mathbf{x}_2}{1+\|\mathbf{x}_2\|^2} -\mathbf{y}_2\right)\right).$$
The left hand side is equal to:
$$\left(e^{-r(\xi_1, \xi_2, s)} \frac{A}{ A^2 + \|\mathbf{B}\|^2},m^{-1}(\xi_1, \xi_2, s)
\left( -\mathbf{y}(\xi_1, \xi_2, s) + e^{-r(\xi_1, \xi_2, s)}\frac{\mathbf{B}}{A^2 + \|\mathbf{B}\|^2}\right)\right),$$
where $A = \frac{e^{-r_1}}{1+\|\mathbf{x}_1\|^2}$ and $\mathbf{B} = m_1^{-1}\left( \frac{\mathbf{x}_1}{1+\|\mathbf{x}_1\|^2} - \mathbf{y}_1\right) - \mathbf{n} -\mathbf{x}(\xi_1, \xi_2, s)$,
since we assume that $\|\mathbf{n}\|$ is large enough, we have $\|\mathbf{B}\| \asymp \|\mathbf{n}\|$, and since $\|\mathbf{x}_1\|\ll 1$ and 
$e^{r(\xi_1, \xi_2, s)} \asymp 1$, we have that
$$e^{-r(\xi_1, \xi_2, s)} \frac{A}{A^2 +\|\mathbf{B}\|^2} \asymp \frac{e^{-r_1}}{\|\mathbf{n}\|^2}$$
and the first coordinate of right hand side $\frac{e^{-r_2}}{1+\|\mathbf{x}_2\|^2} \asymp e^{-r_2}$ since $\|\mathbf{x}_1\| \ll 1$. Therefore, we have 
$$e^{r_2} \asymp e^{r_1}\|\mathbf{n}\|^2.$$
\par This proves the first part of the proposition.
\par Moreover, by comparing the second component of the coordinate, we have 
$$e^{-r_2}m_2^{-1}\left(\frac{\mathbf{x}_2}{1+\|\mathbf{x}_2\|^2} -\mathbf{y}_2\right) = m^{-1}(\xi_1, \xi_2, s)
\left( -\mathbf{y}(\xi_1, \xi_2, s) + e^{-r(\xi_1, \xi_2, s)}\frac{\mathbf{B}}{A^2 + \|\mathbf{B}\|^2}\right).$$
The right hand side has uniformly bounded norm, this shows that the left hand side is also uniformly bounded. Thus, 
$$\|\frac{\mathbf{x}_2}{1+\|\mathbf{x}_2\|^2} -\mathbf{y}_2\| \ll e^{r_2}.$$
It is easily seen that the norm of $\frac{\mathbf{x}_2}{1+\|\mathbf{x}_2\|^2}$ is uniformly bounded. 
This shows that 
$$\|\mathbf{y}_2\| \ll e^{r_2} = h(\mathfrak{b}).$$
Remark: here we could not assume that $\|\mathbf{y}_2\| \ll h(\mathfrak{b})$ (as we mentioned in the remark after Definition \ref{def_height}), 
since
this will change $\gamma \xi_1 u(\mathbf{n}) \xi_1^{-1} s \xi_2 $ to $\gamma \xi_1 u(\mathbf{n}) \xi_1^{-1} s \xi_2 u(\mathbf{n}')$ for some 
$\mathbf{n}' \in \R^{n-1}$, then the following equality will not hold anymore:
$$u(\mathbf{x}_1)\sigma u(\mathbf{y}_1) a(r_1) m_1 u(\mathbf{n}) \xi_1^{-1} s \xi_2= u(\mathbf{x}_2) \sigma u(\mathbf{y}_2) a(r_2) m_2.)$$

\par Now we start with 
$$\gamma \xi_1 u(\mathbf{n}) \xi_1^{-1} s \xi_2 = u(\mathbf{x}_1)\sigma u(\mathbf{y}_1) a(r_1) m_1 u(\mathbf{n})k(\xi_1, \xi_2, s),$$
on the other hand,
$$\gamma \xi_1 u(\mathbf{n}) \xi_1^{-1} s \xi_2 = u(\mathbf{x}_2) \sigma u(\mathbf{y}_2) a(r_2) m_2.$$
Suppose the $NA$-coordinate of $k(\xi_1, \xi_2, s)$ is $(e^r(\xi,s), \mathbf{z}(\xi,s))$, then the $NA$-coordinate of 
$\gamma \xi_1 u(\mathbf{n}) \xi_1^{-1} s \xi_2$ is the following, by plugging in the first equation:
$$\left( \frac{C}{C^2 + \|\mathbf{D}\|^2} , \mathbf{x}_1 - \frac{\mathbf{D}}{C^2 +\|\mathbf{D}\|^2}\right),$$
where $C = e^{r_1 + r(\xi, s)} \asymp h(\mathbf{a})$, $\mathbf{D} = \mathbf{y}_1 + e^{r_1} m_1 (\mathbf{n} + \mathbf{z}(\xi, s))$,
for $\|\mathbf{n}\|$ large enough, $\|\mathbf{D}\| \asymp h(\mathfrak{a}) \|\mathbf{n}\|$.
So 
$$\left\|\frac{\mathbf{D}}{C^2 +\|\mathbf{D}\|^2}\right\| 
\asymp \frac{1}{h(\mathfrak{a})\|\mathbf{n}\|}.$$
\par By plugging the second equation into the $NA$-coordinate, we have 
$$\left(\frac{e^{r_2}}{e^{2r_2} + \|\y_2\|^2} , \x_2 - \frac{\y_2}{e^{2r_2} + \|\y_2\|^2} \right).$$
We have proved that $\|\y_2\| \ll e^{r_2} = h(\mathfrak{b})$, so 
$$\left\|\frac{\y_2}{e^{2r_2} + \|\y_2\|^2} \right\| \ll \frac{1}{h(\mathfrak{b})} \ll \frac{1}{h(\mathfrak{a})\|\mathbf{n}\|}.$$
By comparing the second component of their $NA$-coordinates, we get 
$$\mathbf{x}_1 - \frac{\mathbf{D}}{C^2 +\|\mathbf{D}\|^2} = \x_2 - \frac{\y_2}{e^{2r_2} + \|\y_2\|^2} ,$$
therefore
$$\x_1 - \x_2 = \frac{\mathbf{D}}{C^2 +\|\mathbf{D}\|^2} - \frac{\y_2}{e^{2r_2} + \|\y_2\|^2},$$
our above argument shows that the right hand side has norm $\asymp \frac{1}{h(\mathfrak{a})\|\mathbf{n}\|} \asymp 
\frac{1}{\sqrt{h(\mathfrak{a}) h(\mathfrak{b})}}$, this shows that 
$$\|\x_1 - \x_2\| \asymp \frac{1}{\sqrt{h(\mathfrak{a}) h(\mathfrak{b})}} .$$
This proves the second part of the proposition.
\end{proof}

\begin{corollary}
 \label{leaves_cusp_enter_successive_cusp}
 There exist constants $\epsilon >0$ and $C >0$ such that for a cusp $\mathfrak{a}$ and one of its 
 successive cusp points $\mathfrak{b}$, then for any 
 $$\x \in B\left(\mathfrak{b}, \frac{\epsilon}{h(\mathfrak{b})}\right) =\left\{\x \in \R^{n-1}: 
 \|\x - \mathfrak{b}\| < \frac{\epsilon}{h(\mathfrak{b})}\right\} ,$$
 $\mathcal{G}_{\x}$ enters both $\mathfrak{a}$ and $\mathfrak{b}$, moreover, from 
 leaving $\mathfrak{a}$ to entering $\mathfrak{b}$, it spends at most time $C$.
\end{corollary}
\begin{proof}
 For $\epsilon >0$ small enough, then if 
 $$\|\x - \mathfrak{b}\| \leq \frac{\epsilon}{h(\mathfrak{b})},$$
 the following is also true:
 $$\|\x - \mathfrak{a}\| \leq \frac{c}{h(\mathfrak{a})},$$
 since $\|\mathfrak{b} -\mathfrak{a}\| \asymp \frac{1}{\sqrt{h(\mathfrak{a}) h(\mathfrak{b})}} 
 \ll \frac{1}{h(\mathfrak{a})} $
 (see Proposition \ref{height_distance_of_successive_cusp}). Therefore 
 $\mathcal{G}_{\x}$ enters both $\mathfrak{a}$ and $\mathfrak{b}$.
 \par The time $t_2(\mathfrak{a})$ when $\mathcal{G}_{\x}$ leaves $\mathfrak{a}$
 satisfies:
 $$e^{t_2(\mathfrak{a})} \asymp \frac{1}{h(\mathfrak{a})\|\x - \mathfrak{a}\|^2} \asymp h(\mathfrak{b}),$$
 and the time $t_1(\mathfrak{b})$ when $\mathcal{G}_{\x}$ enters $\mathfrak{b}$ 
 satisfies:
 $$e^{t_1(\mathfrak{b})} \asymp h(\mathfrak{b}).$$
 Therefore $e^{t_2(\mathfrak{a})} \asymp e^{t_1(\mathfrak{b})}$, which is equivalent to our
 conclusion.
\end{proof}
\par For $\x \in \R^{n-1}$, if we consider the rough spectrum of $\x$, say $\{\mathfrak{a}_i: i \in \mathbb{N}\}$, we have the 
following result:
\begin{proposition}
\label{distance_between_point_and_rough_cusp}
 Let $\x \in \R^{n-1}$, we denote the rough spectrum of $\x$ by $\{\mathfrak{a}_i: i \in \mathbb{N}\}$, then for each $i \in \mathbb{N}$,
 $$\|\x - \mathfrak{a}_i\| \asymp \frac{1}{\sqrt{h(\mathfrak{a}_i) h(\mathfrak{a}_{i+1})}}.$$
 And moreover, for each $i \in \mathbb{N}$, $\mathfrak{a}_{i+1}$ is a successive cusp of $\mathfrak{a}_i$
\end{proposition}
\begin{proof}
 By Corollary \ref{time_roughly_enter_leave_cusp}, the time $t$ when $\mathcal{G}_{\x}$ roughly
 leaves $\mathfrak{a}_i$ and roughly enters $\mathfrak{a}_{i+1}$ satisfies 
 $$e^t \asymp \frac{1}{h(\mathfrak{a}_i)\|\x - \mathfrak{a}_i\|^2}$$
 and 
 $$e^t \asymp h(\mathfrak{a}_{i+1}),$$
 this implies that 
 $$\|\x - \mathfrak{a}_{i+1}\| \asymp \frac{1}{\sqrt{h(\mathfrak{a}_i) h(\mathfrak{a}_{i+1})}}.$$
 The proof of the second statement goes as follows: suppose when $\mathcal{G}_{\x}$ roughly enters $\mathfrak{a}_i$, it is in the fundamental 
 domain $\gamma \mathcal{F}_0$, then $\mathfrak{a}_i = \gamma \xi \infty$, where $\xi \infty$ is some cusp point of $\mathcal{F}_0$. When 
 $\mathcal{G}_{\x}$ leaves $\mathfrak{a}_i$, the fundamental domain it leaves
 must contain $\gamma \xi \infty$ as a cusp point, therefore the fundemantal domain is of form $\gamma \xi u(\mathbf{n})\xi^{-1} \mathcal{F}_0$
 where $\xi u(\mathbf{n}) \xi^{-1} \in \Gamma_{\xi}$. Then when $\mathcal{G}_{\x}$ roughly enters $\mathfrak{a}_{i+1}$, the fundamental domain 
 it enters must be adjacent to $\gamma \xi u(\mathbf{n})\xi^{-1}$, this implies the cusp point $\mathfrak{a}_{i+1}$ must be of form 
 $\gamma \xi u(\mathbf{n})\xi^{-1} s \xi'\infty$ for some $s \in \mathcal{S}$ and some other cusp $\xi' \infty$ of $\mathcal{F}_0$. This completes 
 the proof.
\end{proof}

\section{Counting cusp points in a given region}
In this section we will prove the counting result on cusp points inside a 
given region with heights in a given range, as we mentioned in the introduction (see Problem \ref{counting problem}).
\begin{theorem}
 \label{counting_cusp_thm}
  There exist constants $A_3>0$, $A_1 < 1 < A_2$, $T>0$, $h > 0$ and $\Upsilon >0$ such that for any $t \geq T$, 
  and for any $\gamma \in \Gamma$ satisfying
the cusp $\mathfrak{a} =\gamma\infty \in B$, and $h(\mathfrak{a}) \geq h$, 
we have the number of cusp points $\mathfrak{b}$ of form $\gamma'\infty$ such that
 $h(\mathfrak{b}) \in [A_1 e^t h(\mathfrak{a}), A_2 e^t h(\mathfrak{a})]$ 
 and $\mathfrak{b} \in B\left(\gamma\infty, \frac{A_3}{h(\mathfrak{a})}\right)$ is at least
 $\Upsilon e^{(n-1)t}$. 
\end{theorem}
\begin{remark} The basic argument of the proof is based on the idea in the thesis of Margulis on 
counting closed geodesics in compact Riemannian manifold of negative curvature. The basic
tool is the mixing of geodesic flow on hyperbolic space of finite volume.
\end{remark}
\par The basic idea of the proof goes as follows: given $\epsilon >0$, we take a neighborhood $\Omega$ of 
$\mathrm{id}$ in $G$ of form $\mathcal{N}_{\epsilon} \mathcal{A}_{\epsilon} \mathcal{U}^{-}_{\epsilon} M$, where 
$\mathcal{N}_{\epsilon}$, $\mathcal{A}_{\epsilon}$ and $\mathcal{U}^{-}_{\epsilon}$ are $\epsilon$-neighborhoods of $\mathrm{id}$
in $N$, $A$ and $U^{-}$, respectively, such that it maps to $\Gamma \setminus G$ injectively under the natural projection:
$$\pi: G \rightarrow \Gamma \setminus G.$$
From the mixing property of geodesic flow on $\mathrm{T}^1(\Gamma \setminus \mathbb{H}^n)$ with respect to the Lebesgue measure $\mu_G$, 
we have when $t >0$ large enough, 
$$\mu_G (\Gamma \Omega a(t)  \cap \Gamma \Omega ) \geq \frac{9}{10} (\mu_{G}(\Gamma \Omega ))^2.$$
Unfolding the above intersection to $\mathrm{T}^1(\mathbb{H}^n)$, the left hand side is equal to
$$\sum_{\gamma' \in \Gamma} \mu(\gamma \Omega a(t)  \cap \gamma' \Omega ),$$
where $\mu$ denotes the $G$-invariant Lebesgue measure on $\mathrm{T}^1(\mathbb{H}^n)$. 
By a result proved in ~\cite{Gorodnik_Shah}, 
$$\mu(\gamma \Omega a(t)  \cap \gamma' \Omega ) \leq C(\Omega) e^{-(n-1)t},$$
where $C(\Omega) >0$ is a constant depending on $\Omega$. Then there exists a constant $\Upsilon >0$
such that there are at least $\Upsilon e^{(n-1)t}$
$\gamma' \in \Gamma$ such that 
$$\gamma \Omega a(t) \cap \gamma' \Omega \neq \emptyset. $$
Each such $\gamma'$ will be proved to satisfy the properties described in Theorem \ref{counting_cusp_thm}, 
which finishes the proof.
\par We start with proving the following lemma:
\begin{lemma}
 \label{lemma_cusp_in_given_range}
There exist constants $\epsilon >0$, $A_3 > 0$, $A_2 > 1 > A_3$, such that:
 let $\Omega \subset G$ denote the $\epsilon$-neighborhood of $\mathrm{id}$ of form
$\mathcal{N}_{\epsilon} \mathcal{A}_{\epsilon} \mathcal{U}^{-}_{\epsilon} M$,
given any $\gamma \in \Gamma$ such that $\mathfrak{a} = \gamma \infty \in B$ and 
 $h(\mathfrak{a})$ is large enough, $t >0$ large enough, any $\gamma'$ such that 
 $$\gamma \Omega a(t) \cap \gamma' \Omega \neq \emptyset$$
 satisfies the following:
 \begin{itemize}
  \item $\|\gamma' \infty - \mathfrak{a} \| \leq \frac{A_3}{h(\mathfrak{a})}$.
  \item $h(\gamma'\infty) \in [A_1 e^t h(\mathfrak{a}), A_2 e^t h(\mathfrak{a})]$.
 \end{itemize}

\end{lemma}

\begin{proof}

 By the definition of $\gamma'$, there exist $w_1, w_2 \in \Omega$ such that 
 $$\gamma w_1 a(t) = \gamma' w_2 .$$ 
\par We use $\tilde{h}(\gamma \infty)$ and $\tilde{h}(\gamma'\infty)$. Suppose $\gamma = k a(r) u(\mathbf{x})$, then from the definition of 
$\tilde{h}(\cdot)$, $\tilde{h}(\gamma \infty) = e^r$. Since $\Gamma_{\infty} =\Gamma \cap N$ is a lattice of $N$, we could replace $\gamma$ by $\gamma u(\mathbf{n})$
for appropriate $u(\mathbf{n}) \in \Gamma_{\infty}$ to make $\|\mathbf{x}\| \asymp 1$.
\par Suppose $w_1 = u(\x_1) a(\epsilon_1) u^{-}(\y_1) m_1$, $w_2 =u(\x_2) a(\epsilon_2) u^{-}(\y_2) m_2$, where 
$\|\epsilon_i\| < \epsilon$, $\|\x_i\|\leq \epsilon$ and $\|\y_i\| \leq \epsilon$ for $i=1,2$. Then 
$$\gamma u(\x_1) a(\epsilon_1) u^{-}(\y_1)m_1 a(t)m^{-1}_2 u^{-}(-\y_2) = \gamma' u(\x_2) a(\epsilon_2). $$
Plugging in $\gamma = k a(r) u(\x)$, the left hand side is equal to 
$$\mathrm{LHS} = k' a(r+\epsilon_1) u(e^{-\epsilon_1} m (\x + \x_1)) u^{-}(m \y_1 - e^{-t} \y_2 ) a(t),$$
where $m= m_2 m_1^{-1}$, and $k' = km^{-1} \in K$. Denote $\tilde{r} = r+\epsilon_1$, $e^{-\epsilon_1} m(\x + \x_1) = \tilde{\x}$,
and denote $\tilde{\y} = m \y_1 - e^{-t} \y_2$, then $\tilde{r}$ is close to $r$, $\|\tilde{\x}\| \asymp 1$ and $\|\tilde{\y}\|$ is very small,
then 
$$\mathrm{LHS} = k' a(\tilde{r}) u(\tilde{\x}) u^{-}(\tilde{\y}) a(t),$$
we want to write $k' a(\tilde{r}) u(\tilde{\x}) u^{-}(\tilde{\y})$ 
in terms of $KAN$-decomposition. To do this, we consider its inverse
$$u^{-}(-\tilde{\y}) u(-\tilde{\x}) a(-\tilde{r}) k'^{-1} $$
and calculate its $NA$-coordinate: we 
at first write $u^{-}(-\tilde{\y}) = \sigma u(-\tilde{\y}) \sigma$,
then direct computation shows that its $NA$-coordinate is the following:
$$\sigma \left( \frac{e^{-\tilde{r}}}{e^{-2\tilde{r}} +\|\tilde{\x}\|^2}, 
\frac{\tilde{\x}}{e^{-2\tilde{r}} +\|\tilde{\x}\|^2} - \tilde{\y}\right),$$
we denote 
$$\eta = \frac{e^{-\tilde{r}}}{e^{-2\tilde{r}} +\|\tilde{\x}\|^2} ,$$
and 
$$\mathbf{Z} = \frac{\tilde{\x}}{e^{-2\tilde{r}} +\|\tilde{\x}\|^2} - \tilde{\y},$$
it is easily seen that $\eta \asymp e^{-\tilde{r}}$, and $\|\mathbf{Z}\|$ is close 
to $\|\tilde{x}\|^{-1} \asymp 1$. Then after applying the action of $\sigma$ on 
$(\eta, \mathbf{Z})$, we get 
$$\left(\frac{\eta}{\eta^2 + \|\mathbf{Z}\|^2}, - \frac{\mathbf{Z}}{ \eta^2 + \|\mathbf{Z}\|^2}\right),$$
the $A$-component is 
$$\frac{\eta}{\eta^2 + \|\mathbf{Z}\|^2} \asymp \frac{\eta}{\|\mathbf{Z}\|^2} \asymp e^{-r},$$
and the $N$-component is 
$$- \frac{\mathbf{Z}}{ \eta^2 + \|\mathbf{Z}\|^2} := - \tilde{\mathbf{Z}}$$
has norm $\|\tilde{\mathbf{Z}}\| \asymp \frac{1}{\|\mathbf{Z}\|} \asymp 1$.
This shows that 
$$k' a(\tilde{r}) u(\tilde{\x}) u^{-}(\tilde{\y}) = k'' a(r') u(\tilde{\mathbf{Z}}),$$
where $|r - r'|$ is bounded by a constant. Then 
$$k' a(\tilde{r}) u(\tilde{\x}) u^{-}(\tilde{\y}) a(t) = k'' a(r'+ t) u(e^{-t} \tilde{\mathbf{Z}}),$$
therefore
$$\gamma' u(\x_2) a(\epsilon_2) = k'' a(r'+ t) u(e^{-t} \tilde{\mathbf{Z}}).$$
This easily implies 
$$\gamma' = k'' a(r'+ t -\epsilon_2) u(e^{\epsilon_2 -t}\tilde{\mathbf{Z}} - \x_2),$$
this shows that $\tilde{h}(\gamma'\infty) = e^{r' + t - \epsilon_2} \asymp e^t \tilde{h}(\gamma \infty)$.
\par Next we want to prove that 
$$\|\gamma' \infty - \gamma \infty\| \leq \frac{A_3}{h(\mathfrak{a})}.$$
\par To do this, we firstly consider the Gromov metric $d_o(\cdot, \cdot)$, and then
make use of the fact that
$$d_o(\xi_1, \xi_2) \asymp \|\xi_1 - \xi_2\|$$ 
when $\xi_1, \xi_2 \in B$. 
\par 
For a noncusp point $\gamma w_1 \infty$ with $w_1 \in \Omega$, we could define
the $\tilde{h}(\cdot)$ height of $\gamma w_1 \infty$ similarly 
$$\tilde{h}(\gamma w_1 \infty) = \exp (B_{w_1 \infty} (\gamma^{-1} o, o)).$$
Then the above argument shows that 
$$\tilde{h}(\gamma w_1 \infty) \asymp \tilde{h}(\gamma \infty).$$
Then Proposition \ref{invariant_under_group_action_height_metric} tells that
$$
\begin{array}{cl} & d_o^2(\gamma \infty, \gamma w_1 \infty) \tilde{h}(\gamma \infty) 
\tilde{h}(\gamma w_1 \infty) \\ =  & 
d^2_o(\infty, w_1 \infty) \tilde{h}(\infty) \tilde{h}(w_1 \infty) \\
= & d^2_o(\infty, w_1 \infty) \\
\ll & 1
\end{array}$$
The last inequality above follows from the basic properties of Gromov metric. So we have 
$$d_o(\gamma \infty, \gamma w_1 \infty) \ll 
\frac{1}{\tilde{h}(\gamma \infty) } .$$
Similarly, we have 
$$d_o(\gamma' \infty , \gamma' w_2 \infty) \ll \frac{1}{\tilde{h}(\gamma'\infty)}.$$
Since $\gamma' w_2 \infty = \gamma w_1 a(t) \infty = \gamma w_1 \infty$, and since 
$\tilde{h}(\gamma'\infty) \asymp e^t \tilde{h}(\gamma \infty)$, the following holds:
$$d_o(\gamma \infty, \gamma' \infty) \ll \frac{1}{\tilde{h}(\gamma \infty)}.$$
This completes the proof because when $\gamma \infty, \gamma'\infty \in B$, 
$$\|\gamma\infty - \gamma'\infty\|\asymp d_o(\gamma\infty, \gamma'\infty),$$ 
$$\tilde{h}(\gamma\infty) \asymp h(\gamma \infty),$$ 
and
$$\tilde{h}(\gamma'\infty) \asymp h(\gamma' \infty).$$
\end{proof}

\par We will need the following result of Gorodnik and Shah:

\begin{proposition}(See ~\cite[Proposition 3.2]{Gorodnik_Shah})
\label{prop_gorod_shah}
Let $G$ be a real algebraic group, and $\sigma$ is an involution of $G$. Let $A=\{a(t)\}$ be a one parameter subgroup of $G$, such that $\sigma(a(t))= a(-t)$. Let 
\begin{equation}
\begin{array}{ll}
H=\{ g \in G: \sigma(g) =g\} & U^{+}= \{g \in G : a(-t)g a(t) \rightarrow e \text{ as } t \rightarrow \infty\} \\
U^{-}= \{g \in G : a(t)g a(-t) \rightarrow e \text{ as } t \rightarrow \infty\} & Z=Z_{G}(A)
\end{array}
\end{equation}
Then there exist constant $c$ and $t_0$ such that for neighborhoods $H_{r_1} \subset H$, $Z_{r_2} \subset Z$ and $U^{+}_{r_3}$ of $e$ in $H$, $Z$ and $U^{+}$, respectively, small enough, $t> t_0$ and any $g \in G$, we have
\begin{equation}
\mu_G(g U^{+}_{r_3} Z_{r_2}H_{r_1}\cap U^{+}_{r_3} Z_{r_2}H_{r_1}a(-t)) \leq c e^{-\lambda t}\mu_{U^{+}}(U^{+}_{r_3})^2
\end{equation}
where $\mu_{U^{+}}$ denotes the Haar measure on $U^{+}$, and $\lambda$ is the sum of the eigenvalues of $\mathrm{Ad}(a(1))$ which are
greater than $1$.
\end{proposition}
\begin{remark} In our case, we take $G = \mathrm{SO}(n,1)$ and $\sigma$ be the 
weyl element, then we have $H = K \cong \mathrm{SO}(n)$ and $\lambda = n-1$. We could choose $r_1$, $r_2$ and 
$r_3$ appropriately such that $\Omega \subset U^{+}_{r_3} Z_{r_2}H_{r_1}$. Then the above proposition tells that 
$$\mu(\gamma \Omega a(t)  \cap \gamma' \Omega ) \leq C(\Omega) e^{-(n-1)t},\quad (\ast)$$
for some constant $C(\Omega)$ depending on $\Omega$.
\end{remark}
\begin{proof}[proof of Theorem \ref{counting_cusp_thm}]
As we mentioned before, the diagonal flow $A = \{a(t): t \in \R\}$ is mixing on $\Gamma \setminus G$ with respect to the finite $G$-invariant
measure $\mu_{G}$. So there exists a constant $T >0$ such that for $t > T$, 
$$\mu_G(\Gamma\Omega a(t) \cap \Gamma \Omega) \geq \frac{9}{10} (\mu_G(\Gamma \Omega))^2.$$
Let $\gamma \in \Gamma$ be as above, then by unfolding $\Gamma \Omega a(t) \cap \Gamma \Omega$ to $G$, we have
$$\mu_G(\Gamma \Omega a(t) \cap \Gamma \Omega) = \sum_{\gamma' \in \Gamma} \mu(\gamma\Omega a(t)\cap \gamma' \Omega). $$
By $(\ast)$, we get 
$|\{\gamma' \in \Gamma: \gamma \Omega a(t) \cap \gamma' \Omega \neq \emptyset\}| \geq \Upsilon e^{(n-1)t}$
for some constant $\Upsilon >0$. Here $|\cdot|$ denotes the cardinality of a set. Then applying Lemma \ref{lemma_cusp_in_given_range}, we finish the 
proof of Theorem \ref{counting_cusp_thm}, because any such $\gamma'$ satisfies:
\begin{itemize}
  \item $\|\gamma' \infty - \mathfrak{a} \| \leq \frac{A_3}{h(\mathfrak{a})}$,
  \item $h(\gamma'\infty) \in [A_1 e^t h(\mathfrak{a}), A_2 e^t h(\mathfrak{a})]$,
 \end{itemize}
 for some constants $0 < A_1 < 1 < A_2$ and $A_3 >0$.
\end{proof}

\section{Hausdorff dimension of Divergent trajectories under diagonal geodesic flow}

In this section, we will compute the Hausdorff dimension of $\mathfrak{D}_k$. In the first 
subsection, we will give the lower bound of the Hausdorff dimension, and the second subsection
will be devoted to the proof of upper bound of the Hausdorff dimension.
\par Given $\mathcal{V}_k = (\x_1, \x_2, \dots, \x_k) \in \R^{n-1}$, we denote by $\mathcal{G}(\mathcal{V}_k)$ the diagonal geodesic ray in 
$\mathcal{M}_k$:
$$\mathcal{G}(\mathcal{V}_k) := \{\mathcal{G}(\mathcal{V}_k, t) = (\Gamma_1 u(\x_1)\sigma a(t), \dots, \Gamma_k u(\x_k)\sigma a(t)): t \geq 0\}.$$
We define the function $W(\mathcal{V}_k, t)$ as follows:
$$W(\mathcal{V}_k, t) = \max_{1\leq i\leq k} \{W_i(\x_i, t)\},$$
where $W_i(\x, t)$ denotes the function $W(\x, t)$ defined in Definition \ref{def_depth_function}, 
with respect to the space $\Gamma_i \setminus \mathbb{H}^n$. Then 
$\mathcal{G}(\mathcal{V}_k, t)$ diverges as $t \rightarrow \infty$ if and only if 
$W(\mathcal{V}_k,t) \rightarrow \infty$ as $t \rightarrow \infty$.
\par Because we are interested in divergent trajectories, we may assume
that $W(\mathcal{V}_k, t)$ remains large for all $t>0$ large enough.
\par Suppose at some point $t$, 
$$W(\mathcal{V}_k, t) = W_i(\x_i, t),$$
then there exists a maximal interval $I$ such that for all $s \in I$,
$$W(\mathcal{V}_k,s) = W_i(\x_i,s).$$
Then from the properties of the function $W_i(\x_i, t)$, 
$$W_i(\x_i, t) = \frac{1}{h(\mathfrak{a})(e^{-t} + e^t \|\x_i - \mathfrak{a}\|^2)},$$
where $\mathfrak{a}$ is the cusp point $\mathcal{G}_{\x_i}(t)$ is close to. Then 
$t = - \log \|\x_i - \mathfrak{a}\| \in I$. 
\par Suppose $s \in I$ is the right limit of $I$, then at $s$, $W(\mathcal{V}_k, t)$ changes 
from $W_i(\x_i, t)$ to $W_j(\x_j,t)$, then 
$$W_i(\x_i, t) = \frac{1}{h(\mathfrak{a}) (e^{-t} + e^t \|\x_i - \mathfrak{a}\|^2)},$$
is equal to 
$$W_j(\x_j,t) = \frac{1}{h(\mathfrak{b})(e^{-t} + e^t \|\x_j - \mathfrak{b}\|^2)}.$$
Obviously $-\log\|\x_i - \mathfrak{a}\| < t < - \log\|\x_j -\mathfrak{b}\|$, then
$$W_i(\x_i , t) \asymp \frac{1}{h(\mathfrak{a}) e^t \|\x_i -\mathfrak{a}\|},$$
and 
$$W_j(\x_j, t) \asymp \frac{1}{h(\mathfrak{b}) e^{-t}},$$
this implies that
$$h(\mathfrak{a}) e^t \|\x_i - \mathfrak{a}\|^2 \asymp e^{-t} h(\mathfrak{b}) \asymp \sqrt{h(\mathfrak{a})h(\mathfrak{b})}\|\x_i -\mathfrak{a}\|.$$
Then $W(\mathcal{V}_k, t)$ has a local minimum $\frac{1}{\sqrt{h(\mathfrak{b})h(\mathfrak{a})} \|\x_i -\mathfrak{a}\|}$ when 
$e^t \asymp \frac{\sqrt{h(\mathfrak{b})}}{\sqrt{h(\mathfrak{a})} \|\x_i -\mathfrak{a}\|} $. $W(\mathcal{V}_k,t) \rightarrow \infty$ if
and only if these local minima $\frac{1}{\sqrt{h(\mathfrak{b})h(\mathfrak{a})} \|\x_i -\mathfrak{a}\|}$ tends to $\infty$ as $t \rightarrow \infty$.
\subsection{Lower bound of the Hausdorff dimension}
The basic idea to get the lower bound of the Hausdorff dimension is the following: it suffices to consider the case $k=2$, because if the 
projection of $\mathcal{G}(\mathcal{V}_k)$ to the first two component is divergent, then so is $\mathcal{G}(\mathcal{V}_k)$.
For the first component, we choose $\x_1$ such that 
$\mathrm{Spec}(\x_1)$ admits a subsequence $\{\mathfrak{a}_i: i \in \mathbb{N}\}$ such that
\begin{itemize}
 \item $h(\mathfrak{a}_{i+1}) \asymp h^{1+\delta}(\mathfrak{a}_i)$ for all $i \in \mathbb{N}$, where $\delta>0$ is some constant.
 \item From $\mathcal{G}_{\x_1}$ leaving $\mathfrak{a}_i$ to it entering $\mathfrak{a}_{i+1}$, it spends at most a uniformly bounded time $C>0$.
\end{itemize}
For such $\x_1$, 
$$\|\x_1 - \mathfrak{a}_i\| \asymp \frac{1}{\sqrt{h(\mathfrak{a}_i) h(\mathfrak{a}_{i+1})}} \asymp \frac{1}{h^{1+\delta/2}(\mathfrak{a}_i)},$$
and we could divide $\R$ into disjoint union of a sequence of intervals $\R=\bigcup_{i=1}^{\infty} I_i$, each $I_i = [t_i, t_{i+1})$ where 
$$e^{t_i} \asymp \frac{1}{\|\x_1 - \mathfrak{a}_i\|}$$
for each $t_i$, such that for $t \in I_i$
$$W_1(\x_1, t) \asymp \frac{1}{h(\mathfrak{a}_i)(e^{-t} +e^t\|\x_1 - \mathfrak{a}_i\|^2 )}.$$
\par For each such $\x_1$ fixed, we choose $\x_2$ on the second component inductively as follows:
\begin{itemize}
 \item We start with a cusp $\mathfrak{b}_{p}$ for some large $p$, such that $h(\mathfrak{b}_p) \asymp \frac{h(\mathfrak{a}_p)}{\log h(\mathfrak{a}_p)}$, and 
 take a neighborhood $B\left(\mathfrak{b}_p, \frac{1}{h(\mathfrak{a}_p)}\right)$. We denote 
 $A_{p} = B\left(\mathfrak{b}_p, \frac{1}{h(\mathfrak{a}_p)}\right)$, and $\mathfrak{A}_p = \{A_p\}$.
 \item Suppose we have defined a collection of finitely many neighborhoods $\mathfrak{A}_k$, and each neighborhood of 
 $\mathfrak{A}_k$ is of form
 $A_k =B\left(\mathfrak{b}_k, \frac{1}{h(\mathfrak{a}_k)}\right)$, where $\mathfrak{b}_k$ 
 is some cusp point of $\Gamma_2\setminus \mathbb{H}^n$ with
 $h(\mathfrak{b}_k) \asymp \frac{h(\mathfrak{a}_k)}{\log h(\mathfrak{a}_k)}$. For each $\mathfrak{b}_{k+1} \in A_k$ with 
 $h(\mathfrak{b}_{k+1}) \asymp \frac{h(\mathfrak{a}_{k+1})}{\log h(\mathfrak{a}_{k+1})}$, we construct the neighborhood 
 $B\left(\mathfrak{b}_{k+1}, \frac{1}{h(\mathfrak{a}_{k+1})}\right)$, and define $\mathfrak{A}_{k+1}$ 
 to be the collection of all such neighborhoods.
 This defines $\mathfrak{A}_n$ inductively for all $n\in \mathbb{N}$.
\end{itemize}
Denote $\mathcal{A}_k = \bigcup_{A_k \in \mathfrak{A}_k} A_k$, and define $A_{\infty} = \bigcap_{k} \mathcal{A}_k$. 
We will prove the following lemma:
\begin{lemma}
 \label{lemma_second_component_give_div_traj}
 Fix $\x_1$ and define $A_{\infty}$ as above,  then for any $\x_2 \in A_{\infty}$, 
 $\mathcal{G}(\x_1, \x_2)$ is a divergent trajectory.
\end{lemma}

\begin{proof}
 For any $\x_2 \in A_{\infty}$, there exists a sequence of cusp points $\{\mathfrak{b}_k: k \geq p\}$ such that 
 \begin{itemize}
  \item $h(\mathfrak{b}_k) \asymp \frac{h(\mathfrak{a}_k)}{\log h(\mathfrak{a}_k)}$, for all $k$.
  \item $\x_2 \in B\left(\mathfrak{b}_k, \frac{1}{h(\mathfrak{a}_k)}\right)$.
 \end{itemize}
Then from $e^t \asymp h(\mathfrak{a}_k) \log h(\mathfrak{a}_k)$ to 
$e^t = \frac{1}{\|\x_1 - \mathfrak{a}_k\|}$,
$$
\begin{array}{cl}
 & W_1(\x_1, t) \\
 = & \frac{1}{h(\mathfrak{a}_k) (e^{-t} + e^t \|\x_1 - \mathfrak{a}_k\|^2)} \\
 \asymp & \frac{1}{h(\mathfrak{a}_k) e^{-t}} \\
 \gg & \frac{h(\mathfrak{a}_k) \log h(\mathfrak{a}_k)}{h(\mathfrak{a}_k)} \\
 = & \log h(\mathfrak{a}_k).
\end{array}
$$
From $e^t = \frac{1}{\|\x_1 -\mathfrak{a}_k\|}$ to $e^t \asymp \frac{h(\mathfrak{a}_{k+1})}{\log h(\mathfrak{a}_{k+1})}$,
$$
\begin{array}{cl}
 & W_1(\x_1, t) \\
 \asymp & \frac{1}{e^t h(\mathfrak{a}_k)\|\x_1 - \mathfrak{a}_k\|^2} \\
 \gg & \frac{\log h(\mathfrak{a}_{k+1})}{h(\mathfrak{a}_{k+1})h(\mathfrak{a}_k)\|\x_1 - \mathfrak{a}_k\|^2} \\
 \asymp & \log h(\mathfrak{a}_{k+1}).
\end{array}
$$
From $e^t \asymp \frac{h(\mathfrak{a}_{k+1})}{\log h(\mathfrak{a}_{k+1})} \asymp h(\mathfrak{b}_{k+1})$ 
to $e^t = \frac{1}{\|\x_2 - \mathfrak{b}_{k+1}\|} \asymp h(\mathfrak{a}_{k+1})$, 
$$W_2(\x_2, t) \asymp \frac{e^t}{h(\mathfrak{b}_{k+1})} \asymp \frac{e^t \log h(\mathfrak{a}_{k+1})}{ h(\mathfrak{a}_{k+1})},$$
and it is increasing. We claim that at some $\tau_{k+1}$ such that 
$$e^{2\tau_{k+1}} \asymp \frac{h(\mathfrak{b}_{k+1})}{ h(\mathfrak{a}_k) \|\x_1 - \mathfrak{a}_k\|^2} \asymp 
h(\mathfrak{b}_{k+1}) h(\mathfrak{a}_{k+1}),$$
$W(\mathcal{V}_2, t)$ admits a local minimum and 
$$W(\mathcal{V}_2, \tau_{k+1}) = W_2(\x_2, \tau_{k+1}) \asymp \sqrt{\frac{h(\mathfrak{a}_{k+1})}{h(\mathfrak{b}_{k+1})}}  
\asymp \sqrt{\log h(\mathfrak{a}_{k+1})}.$$
Since $\sqrt{\log h(\mathfrak{a}_{k+1})} \rightarrow \infty$ as $t \rightarrow \infty$, we have 
the diagonal geodesic ray $\mathcal{G}(\x_1, \x_2)$ is divergent.
\end{proof}
Therefore, once we choose $\x_1$ as above and then choose a $\x_2\in A_{\infty}$ for this fixed $\x_1$, we will get a divergent 
trajectory. We will then compute the Hausdorff dimension of the collection of such $\x_1$'s, and for each $\x_1$ fixed, we will give 
the Hausdorff dimension of $A_{\infty}$, by the following lemma, the Hausdorff dimension of divergent trajectories is at least the 
sum of them:
\begin{lemma}[Marstrand Slicing Theorem] 
\label{slicing_theorem}
Let $A$ and
$B$ be metric spaces, and let $C$ be a subset of the direct product $A \times B$. Assume that
the projection of $C$ onto $A$, $\mathrm{Proj}_A (C)$ has Hausdorff dimension at least $\alpha >0$,
and for every $a \in \mathrm{Proj}_A (C)$, if we define
$$B_{a} = C \cap (\{a\}\times B)$$
then the Hausdorff dimension $\dim_H(B_{a})\geq \beta >0$ for all $a \in \mathrm{Proj}_A (C)$,
then we have
$$\dim_H C \geq \alpha +\beta.$$
\end{lemma}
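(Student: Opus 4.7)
The plan is to prove $\dim_H C \geq \alpha + \beta$ via the mass distribution principle (converse Frostman): for any $s_1 < \alpha$ and $s_2 < \beta$ I would construct a nonzero finite Borel measure $\lambda$ supported on $C$ satisfying $\lambda(B(p,r)) \ll r^{s_1 + s_2}$ uniformly in $p \in A \times B$ and $r > 0$. This forces $\dim_H C \geq s_1 + s_2$, and letting $s_1 \nearrow \alpha$ and $s_2 \nearrow \beta$ yields the claim.

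First, since $\dim_H \mathrm{Proj}_A(C) \geq \alpha > s_1$, Frostman's lemma produces a nonzero finite Borel measure $\mu$ supported on $\mathrm{Proj}_A(C)$ with $\mu(B(a,r)) \leq r^{s_1}$ for every $a \in A$ and $r > 0$. Next, for each $a \in \mathrm{Proj}_A(C)$, the slice $B_a$ has $\dim_H B_a \geq \beta > s_2$, so by Frostman applied fiberwise there exists a probability measure $\nu_a$ supported on $B_a$ with $\nu_a(B(b,r)) \leq C(a) r^{s_2}$. Define the measure $\lambda$ on $A \times B$ by
$$
\lambda(E) = \int \nu_a(\{b : (a,b) \in E\}) \, d\mu(a).
$$
Then $\lambda$ is concentrated on $C$, is nonzero since $\mu$ is nonzero and each $\nu_a$ is a probability measure, and for any ball $B((a_0, b_0), r)$,
$$
\lambda(B((a_0,b_0),r)) \leq \int_{B(a_0, r)} \nu_a(B(b_0, r)) \, d\mu(a) \leq C \, r^{s_2} \mu(B(a_0,r)) \leq C \, r^{s_1 + s_2}.
$$
Applying the mass distribution principle to $\lambda$ completes the argument.

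The main obstacle is two-fold and both issues concern the fiberwise Frostman measures $\nu_a$. First, the constant $C(a)$ may blow up as $a$ varies, so the integrand in the ball estimate above need not be uniformly bounded; I would handle this by decomposing $\mathrm{Proj}_A(C)$ into the level sets $E_n = \{a : C(a) \leq n\}$, noting that $\mu(\bigcup_n E_n) = \mu(\mathrm{Proj}_A(C)) > 0$, so some $E_n$ has positive $\mu$-measure, and carrying out the construction with $\mu$ replaced by $\mu|_{E_n}$. Second, one needs the assignment $a \mapsto \nu_a$ to be measurable so that $\lambda$ is well-defined as a Borel measure. In the metric-space generality stated here this is the most delicate point, and I would invoke a measurable selection theorem of Kuratowski--Ryll-Nardzewski type applied to the closed-valued multifunction $a \mapsto \{\nu : \nu \text{ is a Frostman measure of dimension } s_2 \text{ on } B_a\}$; alternatively, in the Euclidean setting relevant to the paper one can use the Joyce--Howroyd subset theorem to pick compact $K_a \subset B_a$ with $0 < \mathcal{H}^{s_2}(K_a) < \infty$ and take $\nu_a$ to be a normalization of $\mathcal{H}^{s_2}|_{K_a}$, handling measurability by a standard approximation argument. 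Once measurability is secured, the ball estimate above is routine and the conclusion follows at once from the mass distribution principle.
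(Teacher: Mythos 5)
The paper does not actually prove this lemma; it defers to Kleinbock--Margulis, Marstrand, and Falconer. Your route --- a Frostman measure $\mu$ on $\mathrm{Proj}_A(C)$, fibrewise Frostman measures $\nu_a$ on the slices $B_a$, the fibred measure $\lambda(E) = \int \nu_a\bigl(\{b : (a,b)\in E\}\bigr)\, d\mu(a)$, and the mass distribution principle --- is precisely the standard argument underlying those references (Falconer's treatment via net measures and $s$-sets is the same idea in different clothing), so the overall strategy is correct, and you have correctly located the two genuine technical points: non-uniform Frostman constants across fibres, and measurability of $a \mapsto \nu_a$.

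Two refinements are worth recording. First, the lemma as stated (and hence any proof along these lines) tacitly needs a regularity hypothesis on $C$: Frostman's lemma, which you invoke both for $\mathrm{Proj}_A(C)$ and for every slice $B_a$, requires the set in question to be Borel, or more generally Souslin/analytic, and for an arbitrary subset of an arbitrary product of metric spaces the statement is not literally true. The paper glosses over this and relies on the sets in its application being nice; you should make the hypothesis explicit. Second, the order of operations between your level-set decomposition and the measurable selection matters: $C(a)$ is only well-defined after a specific $\nu_a$ has been chosen (or after passing to an infimum over admissible measures), and you need $a \mapsto C(a)$ to be $\mu$-measurable before $\mu(E_n)$ makes sense. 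In a Polish setting this does follow once $a \mapsto \nu_a$ is chosen measurably, since $C(a)$ can be written as a countable supremum of $\nu_a(B(b,r))/r^{s_2}$ over a dense family of pairs $(b,r)$, so this is a matter of sequencing the steps rather than a conceptual gap. One small terminological correction: the Joyce--Howroyd theorem is not an ``alternative in the Euclidean setting'' but rather the \emph{extension} of the existence of compact $s$-sets from $\mathbb{R}^n$ (where it is Besicovitch's theorem) to general complete separable metric spaces; in $\mathbb{R}^n$ the classical Frostman lemma already gives you what you need directly.
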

\begin{remark} The reader may see ~\cite[Section 1.4]{Kleinbock_Margulis}, ~\cite{Mars}, and
~\cite[Theorem 5.8]{Falc} for the detail of this theorem.
\end{remark}
\par We at first prove that for each fixed $\x_1$ as above, the set $A_{\infty}$ defined as above has Hausdorff dimension
$n-1$:
\begin{theorem}
 \label{Hausdorff_dimension_second_component}
Given constants $\delta >0$ and $C>0$, let $\x_1 \in \R^{n-1}$ satisfy the following condition: its spectrum $\mathrm{Spec}(\x_1)$ admits a sebsequence 
 $\{\mathfrak{a}_k: k \in \mathbb{N}\}$, such that
 \begin{itemize}
  \item $h(\mathfrak{a}_{k+1}) \asymp h^{1+\delta}(\mathfrak{a}_k)$ for all $k \in \mathbb{N}$.
  \item From $\mathcal{G}_{\x_1}$ leaving $\mathfrak{a}_k$ to it entering $\mathfrak{a}_{k+1}$, it spends at most time 
  $C$.
 \end{itemize}
According to this $\x_1$, we construct $\mathfrak{A}_k$ inductively as above, and then define 
$$\mathcal{A}_k = \bigcup_{A \in \mathfrak{A}_k} A,$$
and
$$A_{\infty} = \bigcap_{k} \mathcal{A}_k,$$
then the Hausdorff dimension of $A_{\infty}$
$$\dim_H A_{\infty} \geq n-1 .$$
\end{theorem}
\par We introduce the notion of Cantor-like collection of compact subsets of $\mathbb{R}^{n-1}$ as follows:
\begin{definition}
\label{def_cantor_like_subset}
Starting with a bounded closed subset $A_0$ with positive Lebesgue measure, 
a Cantor-like countable collection $\mathfrak{A}$ is the union of finite collections
$\mathfrak{A}_k$ of compact subsets of $A_0$, for $k\in \mathbb{N}$, satisfying the following conditions:
\begin{enumerate}
 \item $\mathfrak{A}_0 =\{A_0\}$
 \item every $\mathfrak{A}_k$ is a finite collection of disjoint compact subsets of $A_0$.
 \item for every $k \geq 1$, for every $A \in \mathfrak{A}_k$, we can find some $B \in \mathfrak{A}_{k-1}$ such that $A \subset B$.
 \item let $d_k(\mathfrak{A}) = \sup_{A \in \mathfrak{A}_k} \mathrm{diam} (A)$, where $\mathrm{diam} (A)$ denotes the diameter of $A$, then 
 $d_k(\mathfrak{A}) \rightarrow 0$ as $k \rightarrow \infty$.
\end{enumerate}
For a Cantor-like collection $\mathfrak{A} = \bigcup_{k =0}^{\infty} \mathfrak{A}_k$, let $\mathcal{A}_k = \bigcup_{A \in \mathfrak{A}_k} A$, and 
$A_{\infty} = \bigcap_{k=0}^{\infty} \mathcal{A}_k$, and we define $\Delta_k (\mathfrak{A})$ as folows:
$$\Delta_k (\mathfrak{A}) = \inf_{B \in \mathfrak{A}_k} \frac{m(B \cap \mathcal{A}_{k+1})}{m(B)}$$
where $m(\cdot)$ denotes the Lebesgue measure of $\mathbb{R}^{n-1}$.
\end{definition}
The basic tool of the proof is the following theorem:
\begin{theorem}{(See ~\cite[Section 4.1]{Kleinbock_Margulis})}
 \label{tool_for_lower_bound_hausdorff_dimension}
 Let $\mathfrak{A} = \bigcup_{k=0}^{\infty} \mathfrak{A}_k$ be a Cantor-like collection of compact subsets of $A_0$, and let $\mathcal{A}_k$, 
 $A_{\infty}$, $d_k(\mathfrak{A})$ and $\Delta_k(\mathfrak{A})$ be as above, then we have the Hausdorff dimension of $A_{\infty}$
 $$\dim_{H} (A_{\infty}) \geq n-1 - \limsup_{j\rightarrow \infty} \frac{\sum_{i=0}^{j-1} \log (\frac{1}{\Delta_i(\mathfrak{A})})}{\log (\frac{1}{d_j(\mathfrak{A})})}$$
\end{theorem}
\begin{remark} The statement given in ~\cite[Section 4.1]{Kleinbock_Margulis} is more general 
than the version above, and the result was proved in ~\cite{McMullen} and 
~\cite{Urbanski}.
\end{remark}
\par Now we are ready to prove Theorem \ref{Hausdorff_dimension_second_component}:
\begin{proof}[Proof of Theorem \ref{Hausdorff_dimension_second_component}]
 To apply Theorem \ref{tool_for_lower_bound_hausdorff_dimension}, we need to estimate $\Delta_k(\mathfrak{A})$.
 \par Given each $A_k \in \mathfrak{A}_k$, suppose $A_k =B\left(\mathfrak{b}_k, \frac{1}{h(\mathfrak{a}_k)}\right)$,
 where $h(\mathfrak{b}_k) \asymp \frac{h(\mathfrak{a}_k)}{\log h(\mathfrak{a}_k)}$. We need to count how many cusp points 
 $\mathfrak{b}_{k+1} \in A_k$ with $h(\mathfrak{b}_{k+1}) \asymp \frac{h(\mathfrak{a}_{k+1})}{\log h(\mathfrak{a}_{k+1})}$.
 \par To do this, we at first find a successive cusp point $\mathfrak{b}'_k$ of $\mathfrak{b}_k$ in 
 $A_k$ such that 
 $$h(\mathfrak{b}'_k) \asymp h(\mathfrak{a}_k) \log h(\mathfrak{a}_k),$$
 then 
 $$\|\mathfrak{b}_k - \mathfrak{b}'_k\| \asymp \frac{1}{h(\mathfrak{a}_k)}.$$
 Let constants $T$, $\Upsilon$, $A_1$, $A_2$ and $A_3$ be as in Theorem \ref{counting_cusp_thm}, and define 
 $A'_k = B\left(\mathfrak{b}'_k, \frac{A_3}{h(\mathfrak{b}'_k)}\right)$, then $A'_k \subset A_k$, and applying 
 Theorem \ref{counting_cusp_thm}, for $t >T$, there are at least $\Upsilon e^{(n-1)t}$ cusp points $\mathfrak{b}_{k+1}$'s in 
 $A'_k$ with $h(\mathfrak{b}_{k+1}) \in [A_1 e^t h(\mathfrak{b}'_k), A_2 e^t h(\mathfrak{b}'_k)]$. Let 
 $$e^t = \frac{h(\mathfrak{b}_{k+1})}{h(\mathfrak{b}'_k)} \asymp \frac{h^{\delta}(\mathfrak{a}_k)}{\log^2 h(\mathfrak{a}_k)},$$
we get the number of choices for 
$\mathfrak{b}_{k+1}$ is $\asymp \frac{h^{(n-1)\delta}(\mathfrak{a}_k)}{\log^{2(n-1)} h(\mathfrak{a}_k)}$. 
\par Therefore 
$$\frac{1}{\Delta_k(\mathfrak{A})} \asymp \frac{\log^{2(n-1)} h(\mathfrak{a}_k)}{h^{(n-1)\delta}(\mathfrak{a}_k)} h^{(n-1)\delta}(\mathfrak{a}_k) =\log^{2(n-1)} h(\mathfrak{a}_k),$$
so 
$$\log \left(\frac{1}{\Delta_k(\mathfrak{A})}\right) =O(k),$$
and because 
$$\log \left(\frac{1}{d_k(\mathfrak{A})}\right) = (1+\delta)^k \log h(\mathfrak{a}_0) + O(k),$$
we have 
$$\begin{array}{cl}
   & \dim_H A_{\infty} \\
   \geq & n-1 - \limsup_{j\rightarrow \infty} \frac{\sum_{i=0}^{j-1} O(i)}{(1+\delta)^j \log h(\mathfrak{a}_0) + O(j)} \\
   = & n-1 - \limsup_{j\rightarrow \infty} \frac{O(j^2)}{(1+\delta)^j \log h(\mathfrak{a}_0) + O(j)} \\
   = & n-1.
  \end{array}
$$
This completes the proof.
\end{proof}

\par We give the lower bound of the Hausdorff dimension of collection of eligible $\x_1$'s in the following proposition:
\begin{proposition}
 \label{Hausdorff_dimension_first_component}
 For every $\delta >0$, we define the $\mathfrak{D}_{\delta}$ to be the 
 collection of $\x_1 \in \R^{n-1}$ satisfying the following:
$$\dim_{H} \mathfrak{D}_{\delta} \geq \frac{n-1}{2+\delta}$$ 
\end{proposition}

\begin{proof}
We will at first construct the Cantor-like collection $\mathfrak{A} = \bigcup \mathfrak{A}_k$.
\par Let the constant $C>0$ be the same as those in Corollary \ref{leaves_cusp_enter_successive_cusp}.
Starting with a cusp point $\mathfrak{a}_0$ with height $h(\mathfrak{a}_0)$ large enough, we define $A_0$ to be the closed ball 
centered at $\mathfrak{a}_0$ with radius $\frac{1}{h^{1+\delta/2}(\mathfrak{a}_0)}$, i.e., 
$$A_0 = B\left(\mathfrak{a}_0, \frac{1}{h^{1+\delta/2}(\mathfrak{a}_0)}\right).$$
We define $\mathfrak{A}_0 = \{A_0\}$.
\par Suppose we have defined $\mathfrak{A}_k$ for $k \geq 0$, and every 
$A_k \in \mathfrak{A}_k$ is of form 
$B\left(\mathfrak{a}_k, \frac{1}{h^{1+\delta/2}(\mathfrak{a}_k)}\right)$. We fix one such $A_k$.
We take $\mathfrak{a}_{k+1}$ to be a successive cusp point of $\mathfrak{a}_k$ 
(see Definition \ref{def_successive_cusp}) such that 
$h(\mathfrak{a}_{k+1}) \asymp h^{1+\delta}(\mathfrak{a}_k)$, and define 
$A_{k+1} =B \left(\mathfrak{a}_{k+1}, \frac{1}{h^{1+\delta/2}(\mathfrak{a}_{k+1})}\right)$. 
From 
Proposition \ref{height_distance_of_successive_cusp}, we have that 
$$\|\mathfrak{a}_{k+1} - \mathfrak{a}_k\| 
\asymp \frac{1}{\sqrt{h(\mathfrak{a}_k) h(\mathfrak{a}_{k+1})}} \asymp 
\frac{1}{h^{1+\delta/2}(\mathfrak{a}_k)},$$
so $A_{k+1} \subset A_k$ if we choose the constants appropriately in the approximation
$$h(\mathfrak{a}_{k+1}) \asymp h^{1+\delta}(\mathfrak{a}_k).$$
We take all possible $A_k$'s in $\mathfrak{A}_k$ 
and construct all possible $A_{k+1}$'s as above, and define $\mathfrak{A}_{k+1}$ to be 
the collection of all such $A_{k+1}$'s.
\par This finishes the inductive construction of $\mathfrak{A} = \bigcup_{k = 0}^{\infty} \mathfrak{A}_k$, 
and thus
$\mathcal{A}_k = \bigcup_{A \in \mathfrak{A}_k} A$ and 
$A_{\infty} =\bigcap_{k=0}^{\infty} \mathcal{A}_k$ are defined accordingly.
\par We will prove that $A_{\infty} \subset \mathfrak{D}_{\delta}$. Take any $\x \in  A_{\infty}$, then there exists a sequence 
$\{A_k = B\left(\mathfrak{a}_k, \frac{1}{h^{1+\delta/2}(\mathfrak{a}_k)}\right) 
\in \mathfrak{A}_k: n \in \mathbb{N} \}$, such that $\x \in \bigcap_{k=0}^{\infty} A_k$.
By Corollary \ref{leaves_cusp_enter_successive_cusp}, the geodesic ray $\mathcal{G}_{\x}$ enters
$\mathfrak{a}_k$ consequently, and moreover, from leaving $\mathfrak{a}_k$ to entering 
$\mathfrak{a}_{k+1}$, it spends at most time $C$. Because 
$h(\mathfrak{a}_{k+1}) \asymp h^{1+\delta}(\mathfrak{a}_k)$, this shows that 
$\x \in \mathfrak{D}_{\delta}$.
\par Next we will apply Theorem \ref{tool_for_lower_bound_hausdorff_dimension} to give the lower bound
of the Hausdorff dimension of $A_{\infty}$.
\par Take any $A_k \in \mathfrak{A}_{k}$ of form 
$B\left(\mathfrak{a}_k, \frac{1}{h^{1+\delta/2}(\mathfrak{a}_k)}\right)$, suppose that 
$$\mathfrak{a}_k= \gamma \xi \infty ,$$
we want to count how many successive cusp points $\mathfrak{a}_{k+1}$'s we could choose. 
From Definition \ref{def_successive_cusp}, every $\mathfrak{a}_{k+1}$ is of form
$$\mathfrak{a}_{k+1} = \gamma \xi u(\mathbf{n}) \xi^{-1} s \xi' \infty, $$
and from Proposition \ref{height_distance_of_successive_cusp},
$h(\mathfrak{a}_{k+1}) \asymp h(\mathfrak{a}_k)\|\mathbf{n}\|^2$, therefore we have 
$\|\mathbf{n}\| \asymp h^{\delta/2}(\mathfrak{a}_k)$. So the number of choices 
for $\mathfrak{a}_{k+1}$ is $\asymp h^{(n-1)\delta/2}(\mathfrak{a}_k)$. 
Therefore 
$$\Delta_k(\mathfrak{A}) \asymp h^{(n-1)\delta/2}(\mathfrak{a}_k) 
\frac{h^{(n-1)(1+\delta/2)}(\mathfrak{a}_k)}{h^{(n-1)(1+\delta)(1+\delta/2)}(\mathfrak{a}_k)} = h^{-(n-1)(1+\delta)\delta/2}(\mathfrak{a}_k). $$
And 
$$d_k(\mathfrak{A}) \asymp \frac{1}{h^{1+\delta/2}(\mathfrak{a}_k)}.$$
So 
$$\log \left(\frac{1}{\Delta_k(\mathfrak{A})}\right) = (n-1)(1+\delta)\delta/2 
\log h(\mathfrak{a}_k) = (n-1)(1+\delta)^{k+1} \delta/2 \log (h(\mathfrak{a}_0)) + O(k),$$
 and 
$$\log \left(\frac{1}{d_k(\mathfrak{A})}\right) = 
(1+\delta/2) \log h(\mathfrak{a}_k) = (1+\delta/2)(1+\delta)^k 
\log (h(\mathfrak{a}_0)) + O(k)$$
where $O(k)$ denotes some quantity depending on $k$ such that $|O(k)| \ll k$ for large $k$.
\par Then from Theorem \ref{tool_for_lower_bound_hausdorff_dimension},
$$\begin{array}{cl}
   & \dim_H A_{\infty} \\
   \geq & n-1 - \limsup_{j\rightarrow \infty} \frac{\sum_{i=0}^{j-1} \log (\frac{1}{\Delta_i(\mathfrak{A})})}{\log (\frac{1}{d_j(\mathfrak{A})})} \\
   = & n-1 - \limsup_{j \rightarrow \infty} \frac{\sum_{i=0}^{j-1} (n-1)\delta/2 (1+\delta)^{i+1}\log h(\mathfrak{a}_0) + O(i)}{(1+\delta/2)(1+\delta)^j \log h(\mathfrak{a}_0) +O(j)} \\
   = & n-1 - \limsup_{j \rightarrow \infty} \frac{(n-1)(1+\delta)^{j+1}/2 \log h(\mathfrak{a}_0) + O(j^2)}{(1+\delta/2)(1+\delta)^j \log h(\mathfrak{a}_0) + O(j)} \\
   = & n-1 - \frac{(n-1)(1+\delta)}{2 +\delta} = \frac{n-1}{2+\delta}.
  \end{array}
$$
This completes the proof.
\end{proof}
Combining Theorem \ref{Hausdorff_dimension_second_component}, Proposition \ref{Hausdorff_dimension_first_component} and Lemma \ref{slicing_theorem},
we get that 
$$\dim_H \mathfrak{B}_k \geq (k-1)(n-1) + \frac{n-1}{2+\delta},$$
for all $\delta >0$. By letting $\delta \rightarrow 0$, we show that
\begin{equation}
 \label{lower_bound_hausdorff_dim_equation}
 \dim_H \mathfrak{B}_k \geq k(n-1) -\frac{n-1}{2}.
\end{equation}
\subsection{Upper bound of the Hausdorff dimension}
The basi idea to get the upper bound of the Hausdorff dimension of $\dim_H \mathfrak{B}_k$ is the following: 
\par At first we choose a small constant $\rho >0$, and define 
$$E(\rho) = \left\{\mathcal{V}_k \in (\R^{n-1})^k: W(\mathcal{V}_k, t) \geq \rho^{-1} 
\quad \text{ for all } t>0 \text{ large enough } \right\},$$
then obviously $\mathfrak{B}_k \subset E(\rho)$. We then construct an indexed self-similar covering
$(\mathcal{B}, J, \varrho)$ of $E(\rho)$, which is defined as follows:
\begin{definition} 
\label{def_self_similar_covering}
Let $\mathcal{B}$ be a countable covering of a subset $E\subset \mathbb{R}^{l}$ by
bounded subsets of $\mathbb{R}^l$ and assume that it is indexed by some countable
set $J$; let $\varrho$ be a function from the set $J$ to the set of all nonempty
subsets of $J$. For any $\alpha \in J$ we write $B(\alpha)$ for the element of $\mathcal{B}$
indexed by $\alpha$. We say $(\mathcal{B}, J, \varrho)$ is an indexed self-similar covering of
$E$ (the indexing function $\iota : J \rightarrow \mathcal{B}$ being implicit) if there exists a
$\lambda$, $0 < \lambda < 1$ such that for every $x \in E$ we have a sequence $(\alpha_j )$ of
elements in $J$ satisfying
\begin{enumerate}
\item $\cap B(\alpha_j) =\{x\}$,
\item $\mathrm{diam} B(\alpha_{j+1}) < \lambda \mathrm{diam} B(\alpha_j )$ for all $j$, and
\item $\alpha_{j+1} \in \varrho(\alpha_j)$ for all $j$.
\end{enumerate}
\end{definition}
And then we apply the following theorem of Cheung to get the upper bound of $\dim_H E(\rho)$:
\begin{theorem}{(See ~\cite[Theorem 5.3]{Cheung1})}
\label{tool_for_upper_bound_hausdorff_dimension}
Let $(\mathcal{B}, J, \varrho)$ be an indexed self-similar covering of a
subset $E \subset \mathbb{R}^l$ and suppose there is an $s > 0$ such that for every
$\alpha \in J$
$$ \sum_{\alpha' \in \varrho(\alpha)} (\mathrm{diam} B(\alpha'))^s \leq (\mathrm{diam} B(\alpha))^s$$
Then $\dim_H E \leq s$.
\end{theorem}
So our first step is to construct the indexed self-similar covering $(\mathcal{B}, J, \varrho)$ of $E(\rho)$.
\par The construction of $(\mathcal{B}, J, \varrho)$ basically follows 
from the work of Cheung (see ~\cite{Cheung1}), with some minor modification.
\par We need some preparation before the construction.

\begin{lemma}
 \label{at_least_one_cusp}
For any noncompact hyperbolic space $\Gamma\setminus \mathbb{H}^n$ with finite volume,
there exists a constant $c>0$ such that for all $X>0$ large enough, and any closed ball $B\in \mathbb{R}^{n-1}$ of
radius $\frac{c}{\sqrt{X}}$, there is at least one cusp of $\Gamma\setminus \mathbb{H}^n$ inside $B$ with height less than
or equal to $X$.
\end{lemma}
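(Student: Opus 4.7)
My plan is to deduce the lemma from Theorem \ref{G_R} (Garland--Raghunathan) applied to a single carefully chosen point of $\mathbb{H}^n$ lying above the center of $B$ at a height calibrated by $X$; this is a direct hyperbolic analogue of Dirichlet's approximation theorem. Let $\mathbf{x}_0 \in \mathbb{R}^{n-1}$ denote the center of $B$; using that $e \in \Xi$ and that translations by $\Gamma \cap N$ preserve both heights and cusps, I first normalize $\mathbf{x}_0$ to lie in a fixed bounded fundamental domain for $\Gamma \cap N \subset N \cong \mathbb{R}^{n-1}$. I then consider the point $p \in \mathbb{H}^n = G/K$ with coordinates $(y_0, \mathbf{x}_0)$ where $y_0 = \epsilonup/X$ for a small constant $\epsilonup > 0$, and I write $p = \gamma \xi \omega \cdot K/K$ via Theorem \ref{G_R}, where $\omega = u(\mathbf{x}') a(r') k' \in \Omega(s_0, \eta_0)$.

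The decisive first step is to verify that every such decomposition must have $\gamma \xi \notin P$; otherwise $\gamma\xi\infty = \infty$ would be the cusp at infinity and would furnish no cusp in $\mathbb{R}^{n-1}$. If $\gamma\xi \in P$, then $\gamma(\xi\infty) = \infty$, so $\xi\infty$ lies in the $\Gamma$-orbit of $\infty$; since distinct elements of $\Xi$ represent distinct $\Gamma$-orbits of cusps, this forces $\xi = e$ and $\gamma \in \Gamma \cap P = (\Gamma \cap N)\rtimes(\Gamma \cap M)$. A direct coordinate computation then shows that $(\gamma\xi)^{-1} p$ has $y$-coordinate equal to $y_0$, and membership in $\Omega(s_0, \eta_0)$ would require $y_0 \geq e^{s_0}$, contradicting $y_0 = \epsilonup/X$ for $X$ large.

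With $\gamma\xi \notin P$ in hand, I may write $\gamma\xi = u(\mathbf{x}_*)\sigma u(\mathbf{y})a(r)m$ with $\mathbf{x}_* = \gamma\xi\infty \in \mathbb{R}^{n-1}$ and $h(\gamma\xi\infty) = e^r$; as in the proof of Lemma \ref{height_is_not_small} I arrange $\|\mathbf{y}\| \ll e^r$ by modifying $\gamma$ on the right by an element of $\Gamma \cap \xi N \xi^{-1}$. Applying this isometry to the $G/K$-coordinate $(e^{r'}, \mathbf{x}')$ of $\omega$ via the formula (\ref{weyl_group_action}) for $\sigma$ and equating with $(y_0, \mathbf{x}_0)$, after enlarging $s_0$ if necessary so that the $e^{2(r+r')}$ term in the denominator dominates $\|e^r m\mathbf{x}' + \mathbf{y}\|^2 \ll e^{2r}$, I obtain the asymptotic relations
$$
y_0 \asymp e^{-(r+r')}, \qquad \|\mathbf{x}_0 - \mathbf{x}_*\| \ll e^{-r-2r'} \leq e^{-s_0} y_0.
$$
Combined with $r' \geq s_0$ these yield $h(\gamma\xi\infty) = e^r \ll X$ and $\|\mathbf{x}_0 - \mathbf{x}_*\| \ll 1/X$, which is stronger than the required bound $c/\sqrt{X}$ for $X$ large; after absorbing constants into $\epsilonup$ and $c$, this is the desired cusp in $B$.

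\textbf{Main obstacle.} The delicate point is excluding the parabolic case $\gamma\xi \in P$, since otherwise Theorem \ref{G_R} merely places $p$ deep in the cusp at infinity and produces no cusp in $\mathbb{R}^{n-1}$. This requires both the fact that distinct elements of $\Xi$ represent distinct cusp orbits and the coordinate check that $y_0 < e^{s_0}$ rules out every $\gamma \in \Gamma \cap P$. Once that hurdle is cleared, extracting $h$ and $\mathbf{x}_*$ from the Bruhat decomposition is routine and parallels the computations already carried out in the proofs of Propositions \ref{bruhat_rep_of_cusp} and \ref{distance_to_cusp}.
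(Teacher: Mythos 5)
Your proposal takes a genuinely different route from the paper. The paper's proof is a one-liner built on the dynamical machinery of Section~3: it takes the spectrum of the geodesic $\mathcal{G}_{\mathbf{x}}$ through the center of $B$, chooses the last cusp $\mathfrak{a}(p)$ with height $\le X$, and applies equation~(\ref{point_distance_cusp}) directly to get $\|\mathbf{x}-\mathfrak{a}(p)\|\asymp (h(\mathfrak{a}(p))h(\mathfrak{a}(p+1)))^{-1/2}\ll X^{-1/2}$, since $h(\mathfrak{a}(p+1))\ge X$ and $h(\mathfrak{a}(p))\gg 1$. Your approach is a static hyperbolic Dirichlet argument: lift the center of $B$ to the horospherical point $(\epsilonup/X,\mathbf{x}_0)\in\mathbb{H}^n$, apply Theorem~\ref{G_R}, exclude $\gamma\xi\in P$, and read off the cusp $\gamma\xi\infty$ and its height from the Bruhat data. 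This buys self-containment (no reliance on the geodesic-entry estimates) at the cost of redoing part of the Section~3 coordinate computation; the identification of $\gamma\xi\not\in P$ is also a nice structural point the paper doesn't need to spell out.

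There is, however, a real gap in the quantitative step. The relations $y_0\asymp e^{-(r+r')}$ and $\|\mathbf{x}_0-\mathbf{x}_*\|\ll 1/X$ do not follow, and the mechanism you invoke to force them — ``enlarging $s_0$ so that $e^{2(r+r')}$ dominates'' — is not available: Theorem~\ref{G_R} fixes one pair $(s_0,\eta_0)$ with $G=\Gamma\Xi\Omega(s_0,\eta_0)$, and once you modify $\gamma$ on the right to control $\|\mathbf{y}\|$ you change the remainder $\omega$, so you cannot simultaneously keep $\omega\in\Omega(s_0,\eta_0)$. When $\|e^r m\mathbf{x}'+\mathbf{y}\|$ is comparable to $e^{r+r'}$, you only get $\|\mathbf{x}_0-\mathbf{x}_*\|\asymp e^{-(r+r')}$, not $\ll e^{-r-2r'}$. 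Fortunately the conclusion survives with no normalization and no enlargement of $s_0$: writing $B=e^{r+r'}$ and $A=\|e^r m\mathbf{x}'+\mathbf{y}\|$, formula~(\ref{weyl_group_action}) gives exactly $y_0=B/(B^2+A^2)$ and $\|\mathbf{x}_0-\mathbf{x}_*\|=A/(B^2+A^2)$, hence $y_0^2+\|\mathbf{x}_0-\mathbf{x}_*\|^2=y_0/B$, so $\|\mathbf{x}_0-\mathbf{x}_*\|\le\sqrt{y_0/B}$. Since $B=h(\gamma\xi\infty)e^{r'}\gg e^{s_0}$ by Lemma~\ref{height_is_not_small}, this is $\ll\sqrt{y_0}=\sqrt{\epsilonup/X}$ — the required $c/\sqrt{X}$, not the stronger $1/X$. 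Likewise $y_0\le 1/B$ gives $h(\gamma\xi\infty)\le e^{-r'}/y_0\le e^{-s_0}X/\epsilonup$, which is $\le X$ once $\epsilonup\le e^{-s_0}$. You should replace the ``enlarge $s_0$'' argument with this identity; as written, the intermediate claims are false in general even though the final statement is correct.
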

\begin{proof}
 Let $\mathbf{x} \in \mathbb{R}^{n-1}$ denote the center of the ball, and consider the rough
 spectrum of $\mathcal{G}_{\mathbf{x}}$, say
 $\{\mathfrak{a}_i:i\in \mathbb{N} \}$. Let $\mathfrak{a}_i$ denote the cusp 
 with largest height less than or equal to $X$, then we have 
 $$\|\mathbf{x} - \mathfrak{a}_i\| \asymp \frac{1}{\sqrt{h(\mathfrak{a}_i) h(\mathfrak{a}_{i+1})}} \ll \frac{1}{\sqrt{X}}$$
 since $h(\mathfrak{a}_{i+1}) \geq X$ and $h(\mathfrak{a}_i) \gg 1$. 
 Therefore there exists some constant $c > 0$ such that 
 $\mathfrak{a}_i$ is inside the ball centered at $\mathbf{x}$ with radius $\frac{c}{\sqrt{X}}$. 
 \par This proves the lemma.
\end{proof}
\begin{remark} For each component $\Gamma_i \setminus \mathbb{H}^n$ and each positive integer $N$, 
then the above lemma tells that there 
exists a countable subset $\mathfrak{E}(i, N)$ of cusps of $\Gamma_i$ such that for the constant $c$ and some smaller costant $c'$
such that every closed ball $B \subset \mathbb{R}^{n-1}$ of radius $\frac{c}{\sqrt{N}}$ contains at least one element of $\mathfrak{E}(i,N)$, and 
in every closed ball of radius $\frac{c'}{\sqrt{N}}$ there is at most one element of $\mathfrak{E}(i,N)$. And moreover, every element 
in $\mathfrak{E}(i,N)$ has height less than or equal to $N$. We fix these subsets.
\end{remark}
\begin{definition}
\label{construction_of_self_similar_covering}
\par Let $\mathcal{Q}_i \subset \mathbb{R}^{n-1}$ denote the set of cusps of $\Gamma_i$, and $I =\{1,\dots , k\}$.
Define $J \subset \mathcal{Q}_1\times \cdots \times \mathcal{Q}_k \times I\times I$ to be the collection of elements
$(\mathfrak{a}_1, \dots, \mathfrak{a}_k , i,j)$ satisfying the following conditions:
\begin{enumerate}
 \item $h(\mathfrak{a}_j) < \rho h(\mathfrak{a}_i)$ 
 \item $h(\mathfrak{a}_l) \in \mathfrak{E}(l,\lfloor h(\mathfrak{a}_i) h(\mathfrak{a}_j)\rfloor)$ for other index $l$.
\end{enumerate}
For all such $(\mathfrak{a_1},\dots, \mathfrak{a}_k, i,j)$, we denote by 
$B(\mathfrak{a}_1,\dots, \mathfrak{a}_k, i,j) \subset \mathbb{R}^{n-1}\times \cdots, \times \mathbb{R}^{n-1}$ the ball centered at
$(\mathfrak{a}_1,\dots, \mathfrak{a}_k)$ with radius $\frac{c}{\sqrt{h(\mathfrak{a}_i) h(\mathfrak{a}_j)}}$ for some constant $c$,
with respect to the supreme norm
of the norm in $\mathbb{R}^{n-1}$, i.e.,
$$\|(\mathbf{x}_1,\dots, \mathbf{x}_k)\| = \max_{1\leq i\leq k} \|\mathbf{x}_i\|$$
We define $\mathcal{B}$ to be the collection of all $B(\mathfrak{a}_1,\dots, \mathfrak{a}_k, i,j)$ for 
$(\mathfrak{a}_1,\dots, \mathfrak{a}_k,i,j) \in J$.
And we define 
$$A(\mathfrak{a}_1,\dots, \mathfrak{a}_k, i,j) \subset B(\mathfrak{a}_1,\dots, \mathfrak{a}_k, i,j)$$ 
to be the 
neighborhood of $(\mathfrak{a}_1,\dots, \mathfrak{a}_k)$ whose $i$th component has radius $\frac{c}{h(\mathfrak{a}_i)}$ and other
components have radius $\frac{c}{\sqrt{h(\mathfrak{a}_i) h(\mathfrak{a}_j)}}$.
\par We define $\varrho$ as follows: for $(\mathfrak{a}_1,\dots, \mathfrak{a}_k,i,j) \in J$, we define 
$\varrho(\mathfrak{a}_1,\dots, \mathfrak{a}_k,i,j) \subset J$ to be the collection of elements 
$(\mathfrak{a}'_1, \dots, \mathfrak{a}'_k, j , j') \in J$ satisfying the following conditions:
\begin{enumerate}
 \item $\mathfrak{a}'_j$ is a successive cusp point of $\mathfrak{a}_j$, 
 we denote this condition by $\mathfrak{a}_j \mapsto \mathfrak{a}'_j$.
 \item if $j' = i$, then $h(\mathfrak{a}'_i) > h(\mathfrak{a}_i)$ and $\|\mathfrak{a}'_i -\mathfrak{a}_i \| \leq \frac{c}{h(\mathfrak{a}_i)}$
 \item $h(\mathfrak{a}_i) < h(\mathfrak{a}'_j)$ and $h(\mathfrak{a}_j) < h(\mathfrak{a}'_{j'})$.
 \item $A(\mathfrak{a}_1,\dots,\mathfrak{a}_k,i,j)\cap A(\mathfrak{a}'_1,\dots, \mathfrak{a}'_k, j,j') \neq \emptyset$
 \item $h^2(\mathfrak{a}_i) \leq h(\mathfrak{a}'_j) h(\mathfrak{a}'_{j'})$.
\end{enumerate}
\end{definition}
We will prove that the above construction gives an indexed self-similar covering of $E(\sigma)$:
\begin{proposition}
 Definition \ref{construction_of_self_similar_covering} gives an indexed self-similar covering 
 of $E(\sigma)$.
\end{proposition}
\begin{proof}
 For any $\mathcal{V}_k = (\mathbf{x}_1,\dots, \mathbf{x}_2) \in E(\rho)$, then we have 
the local minima of $W(\mathcal{V}_k, t)$ is always greater than $\rho^{-1}$ for $t$ large enough. 
Then there exists 
a sequence of times $\{t_p: p \in \mathbb{N}\}$ such that at each
 $t = t_p$ it admits a local minimum. Then from the previous argument we have at this moment, 
 $W(\mathcal{V}_k, t)$ changes
from $W_{i(p)}(\mathbf{x}_{i(p)}, t)$ to $W_{i(p+1)}(\mathbf{x}_{i(p+1)},t)$ for some indices 
$i(p),i(p+1) \in I$. And in this case if we denote 
by $\mathfrak{a}(p)_{i(p)}$ and 
$\mathfrak{a}(p+1)_{i(p+1)}$ the corresponding cusps of $\mathcal{G}_{\mathbf{x}_{i(p)}}$ 
and $\mathcal{G}_{\mathbf{x}_{i(p+1)}}$ respectively, at this moment, and denote
by $\mathfrak{b}(p)_{i(p)}$ and $\mathfrak{b}(p+1)_{i(p+1)}$ the next cusps of $\mathcal{G}_{\mathbf{x}_{i(p)}}$ and $\mathcal{G}_{\mathbf{x}_{i(p+1)}}$ respectively,
then the previous argument tells that
$$W(\mathcal{V}_k, t_p) =  \frac{1}{\sqrt{h(\mathfrak{a}(p)_{i(p)})h(\mathfrak{a}(p+1)_{i(p+1)})}\|\x_{i(p)} - \mathfrak{a}(p)_{i(p)}\|},$$ and 
$$e^{t_p} \asymp \frac{\sqrt{h(\mathfrak{a}(p+1)_{i(p+1)})}}{\sqrt{h(\mathfrak{a}(p)_{i(p)})}\|\x_{i(p)} - \mathfrak{a}(p)_{i(p)}\|}.$$
Note that if we denote by $\mathfrak{b}(p)_{i(p)}$ the next cusp $\mathcal{G}_{\x_{i(p)}}$ roughly enters after leaving $\mathfrak{a}(p)_{i(p)}$, 
we have that (see Proposition \ref{distance_between_point_and_rough_cusp}) 
$$\|\x_{i(p)} - \mathfrak{a}(p)_{i(p)}\| \asymp \frac{1}{\sqrt{h(\mathfrak{a}(p)_{i(p)}) h(\mathfrak{b}(p)_{i(p)})}}.$$
Then $W(\mathcal{V}_k, t_p) > \rho^{-1}$ implies that $h(\mathfrak{a}(p+1)_{i(p+1)}) < \rho^2 h(\mathfrak{b}(p)_{i(p)})$. 
Then we have a sequence of triplets 
$$\{(\mathfrak{a}(p)_{i(p)}, \mathfrak{b}(p)_{i(p)}, i(p)) : p\in \mathbb{N}\}$$
It is easy to see that $h(\mathfrak{a}(p)_{i(p)})$ and $h(\mathfrak{b}(p)_{i(p)})$ are both increasing with respect to 
$p$.
\par Next we define a subsequence $\{(\mathfrak{a}(p_l)_{i(p_l)}, \mathfrak{b}(p_l)_{i(p_l)}, i(p_l)): l\in \mathbb{N}\}$
of $\{(\mathfrak{a}(p)_{i(p)}, \mathfrak{b}(p)_{i(p)}, i(p))\}$ as follows: we start with some 
large $p_0\in \mathbb{N}$ and suppose $p_l$ is defined, we define $p_{l+1}$ to be the smallest subindex $p$ such that 
$$h^2(\mathfrak{b}(p_l)_{i(p_l)}) \leq h(\mathfrak{b}(p)_{i(p)}) h(\mathfrak{a}(p+1)_{i(p+1)})$$
This defines the subsequence $\{(\mathfrak{a}(p_l)_{i(p_l)}, \mathfrak{b}(p_l)_{i(p_l)}, i(p_l) ): l\in \mathbb{N}\}$.
\par From the definition it is easy to see that 
$$h^2(\mathfrak{b}(p_l)_{i(p_l)}) \leq h(\mathfrak{b}(p_{l+1})_{i(p_{l+1})}) h(\mathfrak{a}(p_{l+2})_{i(p_{l+2})})$$
for all $l\in \mathbb{N}$.
\par Moreover, from the definition we have 
$$h^2(\mathfrak{b}(p_l)_{i(p_l)}) > h(\mathfrak{b}(p_{l+1} -1)_{i(p_{l+1}-1)})h(\mathfrak{a}(p_{l+1})_{i(p_{l+1})}) > \frac{1}{\rho^2} h^2(\mathfrak{a}(p_{l+1})_{i(p_{l+1})})$$
this implies that 
$$h(\mathfrak{a}(p_{l+1})_{i(p_{l+1})}) < \rho h(\mathfrak{b}(p_l)_{i(p_l)})$$
For simplicity, in the following argument, we write $l$ for $p_l$. Then we have 
$$h^2(\mathfrak{b}(l)_{i(l)}) \leq h(\mathfrak{b}(l+1)_{i(l+1)}) h(\mathfrak{a}(l+2)_{i(l+2)})$$ 
and 
$$h(\mathfrak{a}(l+1)_{i(l+1)}) < \rho h(\mathfrak{b}(l)_{i(l)})$$
Then we consider 
$$\mathfrak{u}(l) = (\mathfrak{c}(l)_1,\dots, \mathfrak{c}(l)_k, i(l), i(l+1))$$
for each $l\in \mathbb{N}$ such that 
\begin{enumerate}
 \item $\mathfrak{c}(l)_{i(l)} = \mathfrak{b}(l)_{i(l)}$
 \item $\mathfrak{c}(l)_{i(l+1)} = \mathfrak{a}(l+1)_{i(l+1)}$
 \item for other index $j$, we choose $\mathfrak{c}(l)_j$ to be a cusp in 
 $\mathfrak{E}(j, \lfloor h(\mathfrak{c}(l)_{i(l)})h(\mathfrak{c}(l)_{i(l+1)})\rfloor)$ such that 
 $\|\mathbf{x}_j - \mathfrak{c}(l)_j\| \leq \frac{c}{\sqrt{h(\mathfrak{c}(l)_{i(l)}) h(\mathfrak{c}(l)_{i(l+1)})}}$, this can always be done
 because of the property of $\mathfrak{E}(i, N)$ (see the remark after Lemma \ref{at_least_one_cusp}).
\end{enumerate}
It can be seen from the above argument that $\mathfrak{u}(l) \in J$. Moreover, we claim that 
$$\mathcal{V}_k \in B(\mathfrak{c}(l)_1,\dots, \mathfrak{c}(l)_k , i(l), i(l+1)).$$
At first, $\mathfrak{a}(l+1)_{i(l+1)}$ and $\mathfrak{b}(l+1)_{i(l+1)}$ are two consecutive cusps in the rough spectrum of $\x_{i(l+1)}$,
so we have 
$$\|\mathbf{x}_{i(l+1)} - \mathfrak{a}(l+1)_{i(l+1)}\| \asymp 
\frac{1}{\sqrt{h(\mathfrak{a}(l+1)_{i(l+1)})h(\mathfrak{b}(l+1)_{i(l+1)})}} \leq 
\frac{1}{\sqrt{h(\mathfrak{a}(l+1)_{i(l+1)})h(\mathfrak{b}(l)_{i(l)})}}, $$
and 
$$\|\mathbf{x}_{i(l)} -\mathfrak{b}(l)_{i(l)}\| \ll 
\frac{1}{h(\mathfrak{b}(l)_{i(l)})} \leq \frac{1}{\sqrt{h(\mathfrak{a}(l+1)_{i(l+1)})h(\mathfrak{b}(l)_{i(l)})}}.$$
For any other index $j$, we have 
$$\|\mathbf{x}_j - \mathfrak{c}(l)_j\| \leq \frac{c}{\sqrt{h(\mathfrak{c}(l)_{i(l)}) h(\mathfrak{c}(l)_{i(l+1)})}},$$
from our choice of $\mathfrak{c}(l)_j$. Therefore, we may choose appropriate constant $c>0$ in the definition of 
$B(\mathfrak{c}(l)_1,\dots, \mathfrak{c}(l)_k, i(l), i(l+1))$ such that
$$\mathcal{V}_k \in B(\mathfrak{c}(l)_1,\dots, \mathfrak{c}(l)_k, i(l), i(l+1)),$$ for all $l \in \mathbb{N}$.
Moreover, from the argument above we have 
$$\mathcal{V}_k \in A(\mathfrak{c}(l)_1,\dots, \mathfrak{c}(l)_k, i(l), i(l+1)).$$
\par The diameter of $B(\mathfrak{u}(l))$
$$\mathrm{diam} B(\mathfrak{u}(l)) = \frac{2c}{\sqrt{h(\mathfrak{a}(l+1)_{i(l+1)}) h(\mathfrak{b}(l)_{i(l)})}}.$$
Therefore 
$$
\begin{array}{cl}
& \frac{\mathrm{diam} B(\mathfrak{u}(l+1))}{ \mathrm{diam} B(\mathfrak{u}(l))} \\
= & 
\left(\frac{h(\mathfrak{a}(l+1)_{i(l+1)}) h(\mathfrak{b}(l)_{i(l)})}{h(\mathfrak{a}(l+2)_{i(l+2)}) h(\mathfrak{b}(l+1)_{i(l+1)})} 
\right)^{1/2} \\
\leq & \left( \frac{h(\mathfrak{a}(l+1)_{i(l+1)}) h(\mathfrak{b}(l)_{i(l)})}{h^2(\mathfrak{b}(l)_{i(l)})}\right)^{1/2} \\
\leq & \rho^{1/2}.
\end{array}
$$
This shows that $\mathrm{diam} B(\mathfrak{u}(l+1)) \leq \lambda \mathrm{diam} B(\mathfrak{u}(l))$
for $\lambda =\rho^{1/2} < 1$. And moreover this implies that $\mathrm{diam} B(\mathfrak{u}(l)) \rightarrow 0$ as 
$l\rightarrow \infty$. Combining this with $\mathcal{V}_k \in B(\mathfrak{u}(l))$ for any $l\in \mathbb{N}$,
we have that 
$$\bigcap_{l=0}^{\infty} B(\mathfrak{u}(l)) =\{\mathcal{V}_k\}$$
To show that $(\mathcal{B},J,\varrho)$ is a self-similar covering of $E(\rho)$, the last thing is to verify that
$$(\mathfrak{c}(l+1)_1,\dots, \mathfrak{c}(l+1)_k, i(l+1), i(l+2))
\in \varrho(\mathfrak{c}(l)_1,\dots, \mathfrak{c}(l)_k, i(l), i(l+1)).$$
At first, $\mathfrak{a}(l+1)_{i(l+1)} \mapsto \mathfrak{b}(l+1)_{i(l+1)}$ since they are consecutive cusps in
the rough spectrum of $\x_{i(l+1)}$ (see Proposition \ref{distance_between_point_and_rough_cusp}), so the first condition is verified.
\par If $i(l+2) = i(l)$, then $\mathfrak{c}(l)_{i(l)}$ and $\mathfrak{c}(l+1)_{i(l)}$ are both in the rough spectrum of 
$\mathbf{x}_{i(l)}$, this ensures the second condition.
\par The third condition is true since $h(\mathfrak{a}(l)_{i(l)})$ and$h( \mathfrak{b}(l)_{i(l)})$ are both increasing 
with respect to $l$. The fourth condition is true since $\mathcal{V}_k \in A(\mathfrak{u}(l))
\cap A(\mathfrak{u}(l+1))$ for all $l\in\mathbb{N}$. The last condition is directly from the definition of 
$(\mathfrak{a}(l)_{i(l)}, \mathfrak{b}(l)_{i(l)}, i(l))$. Thus we prove that 
$$(\mathfrak{c}(l+1)_1,\dots, \mathfrak{c}(l+1)_k, i(l+1), i(l+2))
\in \varrho(\mathfrak{c}(l)_1,\dots, \mathfrak{c}(l)_k, i(l), i(l+1)).$$
\par This shows that $(\mathcal{B}, J, \varrho)$ is a self-similar covering of $E(\rho)$.
\end{proof}
\par Now we are ready to apply Theorem \ref{tool_for_upper_bound_hausdorff_dimension} to give the upper bound of 
the Hausdorff dimension of $E(\rho)$.
\par We will need the following lemma concerning the upper bound of the number of cusp points inside a ball $B \subset \R^{n-1}$ 
with height bounded by $X >0$:
\begin{lemma}
 \label{cusp_counting_upper_bound}
 For a noncompact hyperbolic space $\Gamma\setminus \mathbb{H}^n$ 
 and a closed ball $B \subset \mathbb{R}^{n-1}$, we denote by $F_B(t)$ the number of cusps of $\Gamma$ inside $B$ with height 
 less than or equal to $t$ for $t > 0$. Then there exists some constant $\omega >0$ such that for every $X >0$ large enough, we have 
 $$\int_{1}^X \frac{1}{t^{(n-1)/2}} d F_B(t) \leq \omega X^{(n-1)/2} \mathrm{Vol}(B).$$
\end{lemma}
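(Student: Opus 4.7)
The plan is to rewrite the Stieltjes integral as a sum over cusps and estimate it via a dyadic decomposition in the height variable. Since $F_B(t)$ is a step function that jumps by one at each height of a cusp in $B$, we have
$$\int_1^X \frac{1}{t^{(n-1)/2}} dF_B(t) = \sum_{\substack{\gamma'\xi'\infty \in B \\ 1 \leq h(\gamma'\xi'\infty) \leq X}} \frac{1}{h(\gamma'\xi'\infty)^{(n-1)/2}};$$
Lemma \ref{height_is_not_small} handles the lower endpoint up to a harmless constant. I would partition the cusps into dyadic shells $\mathcal{C}_k = \{\gamma'\xi'\infty \in B : h(\gamma'\xi'\infty) \in [2^k, 2^{k+1})\}$ for $k = 0, 1, \dots, \lceil \log_2 X\rceil$, reducing the task to bounding
$$\sum_{k=0}^{\lceil \log_2 X \rceil} 2^{-k(n-1)/2} |\mathcal{C}_k|.$$

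The key input is Proposition \ref{distance_between_two_cusps}. For any two cusps in $\mathcal{C}_k$, both heights lie in $[2^k, 2^{k+1})$, so the proposition gives a Euclidean separation $\gg \frac{1}{\sqrt{h_1 h_2}} \gg 2^{-k-1}$. A standard volume packing argument — placing disjoint Euclidean balls of radius $c \cdot 2^{-k}$ around each cusp in $\mathcal{C}_k$, all contained in a bounded enlargement of $B$ — then gives
$$|\mathcal{C}_k| \ll \mathrm{Vol}(B)\cdot 2^{k(n-1)} + 1,$$
where the additive $1$ is simply there to cover the degenerate regime where $\mathrm{Vol}(B)$ is smaller than a single packing ball.

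Substituting this estimate into the dyadic sum gives
$$\sum_{k=0}^{\lceil \log_2 X\rceil} 2^{-k(n-1)/2}\bigl(\mathrm{Vol}(B)\cdot 2^{k(n-1)} + 1\bigr) \ll \mathrm{Vol}(B)\sum_k 2^{k(n-1)/2} + \sum_k 2^{-k(n-1)/2}.$$
The first sum is a geometric series whose top term dominates and is comparable to $X^{(n-1)/2}$, while the second is bounded by an absolute constant. Taking $X$ large enough (specifically so that $X^{(n-1)/2} \gg 1/\mathrm{Vol}(B)$, which is permissible since the statement is for $X$ large), the second contribution is absorbed into the first, and we conclude
$$\int_1^X \frac{1}{t^{(n-1)/2}} dF_B(t) \leq \omega X^{(n-1)/2} \mathrm{Vol}(B)$$
for some constant $\omega$ depending only on $\Gamma$.

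I do not expect a serious obstacle: the separation bound is sharp at each dyadic scale (precisely because $\sqrt{h_1 h_2} \asymp 2^k$ when both heights lie in $[2^k, 2^{k+1})$), so the packing estimate is tight up to constants and the resulting geometric series is dominated by its top term, producing the exponent $(n-1)/2$ exactly as claimed. The only care needed is the mild bookkeeping around small volumes, handled by the ``$X$ large enough'' clause.
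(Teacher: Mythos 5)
Your proof is correct and rests on the same key input as the paper's (Proposition \ref{distance_between_two_cusps}, the lower bound on the Euclidean separation between cusps), but the packing step is organized differently. You dyadically decompose the heights into shells $[2^k, 2^{k+1})$, pack each shell with disjoint balls of uniform radius $\asymp 2^{-k}$ to get $|\mathcal{C}_k| \ll \mathrm{Vol}(B)\cdot 2^{k(n-1)} + 1$, and then sum the resulting geometric series, whose top term supplies the exponent $X^{(n-1)/2}$. The paper instead performs a single packing: it places around each cusp $\mathfrak{a}$ (with $h(\mathfrak{a}) \leq X$) a ball of \emph{variable} radius $\frac{c}{\sqrt{X\, h(\mathfrak{a})}}$, which is small enough that the separation bound $\|\mathfrak{a}_1-\mathfrak{a}_2\| \gg \frac{1}{\sqrt{h(\mathfrak{a}_1)h(\mathfrak{a}_2)}} \geq \frac{1}{\sqrt{X h(\mathfrak{a}_i)}}$ forces disjointness across all heights simultaneously. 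Summing volumes then produces $\sum_{\mathfrak{a}} \frac{1}{X^{(n-1)/2} h(\mathfrak{a})^{(n-1)/2}} \ll \mathrm{Vol}(B)$ directly, with no dyadic sum, no geometric series, and no additive ``$+1$'' bookkeeping for the small-ball regime. Both arguments are tight and yield the same exponent; the paper's variable-radius packing is a little slicker because the radius is calibrated so the Stieltjes weight $h(\mathfrak{a})^{-(n-1)/2}$ emerges immediately from the ball volume, while your shell decomposition is perhaps more transparent about where the exponent $(n-1)/2$ comes from, at the cost of an extra summation.
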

\begin{proof}
 We may choose $X$ large enough such that $\frac{1}{\sqrt{X}} \ll \mathrm{diam}(B)$.
  Note that the left hand hand of the inequality above is equal to
  $$\sum_{\mathfrak{a}} \frac{1}{h^{(n-1)/2}(\mathfrak{a})},$$
  where $\mathfrak{a}$ runs over all cusps of $\Gamma$ inside $B$ with height less than or equal to $X$ (we may assume that every cusp
  has height greater or equal to $1$ without loss of generality since the height is uniformly bounded from $0$). 
  For every two cusps $\mathfrak{a}_1, \mathfrak{a}_2 \in B$, both with height less than or equal to $X$, we will have 
  $$\|\mathfrak{a}_1 -\mathfrak{a}_2 \| \gg \frac{1}{\sqrt{h(\mathfrak{a}_1) h(\mathfrak{a}_2)}} \geq \frac{1}{\sqrt{X h(\mathfrak{a}_i)}},$$
  for $i= 1,2$. Therefore there exists some constant $c$ such that if we denote by $B(\mathfrak{a})$ a ball centered at $\mathfrak{a}$ with radius
  $\frac{c}{\sqrt{X h(\mathfrak{a})}}$, for every cusp $\mathfrak{a}$ inside $B$ with height less than or equal to $X$, then we have every ball is 
  disjoint from others. And since we choose $X$ large enough, we have 
  $$\bigcup_{\mathfrak{a}} B(\mathfrak{a}) \subset B',$$
  where $B'$ is some larger ball sharing the center of $B$ and has diameter $\tilde{c}\mathrm{diam}(B)$ for some other constant $\tilde{c}$. This implies that
  $$\sum_{\mathfrak{a}} \mathrm{Vol}(B(\mathfrak{a})) \leq \mathrm{Vol}(B') = \tilde{c}^{n-1} \mathrm{Vol} (B).$$
  This is equivalent to 
  $$\sum_{\mathfrak{a}} \frac{c^{n-1}}{X^{(n-1)/2} h^{(n-1)/2}(\mathfrak{a})} \leq \zeta \mathrm{Vol}(B)$$
  for some constant $\zeta >0$. The above inequality implies that
  $$\sum_{\mathfrak{a}} \frac{1}{h^{(n-1)/2}(\mathfrak{a})} \leq \omega X^{(n-1)/2} \mathrm{Vol}(B),$$
 and this implies the conclusion immediately.
 \end{proof}
 Now let us apply Theorem \ref{tool_for_upper_bound_hausdorff_dimension} to give the upper bound of the 
 Hausdorff dimension of $E(\rho)$:
 \begin{theorem}
  \label{upper_bound_hausdorff_dimension}
  For $\rho >0$ small enough, we have 
 $$\dim_H E(\rho) \leq (k-1)(n-1) +(n-1)/2 +\rho^{1/8}.$$
 \end{theorem}
\begin{proof}
  For a fixed $(\mathfrak{a}_1,\dots, \mathfrak{a}_k, i, j)\in J$, and $s = k(n-1) - (n-1)/2+\rho^{1/8}$, we want to estimate
$$
\sum_{(\mathfrak{a}'_1,\dots, \mathfrak{a}'_k, j, j') \in \varrho(\mathfrak{a}_1,\dots, \mathfrak{a}_k, i, j)} 
\left(\frac{\mathrm{diam} B(\mathfrak{a}'_1,\dots, \mathfrak{a}'_k, j, j')}
{\mathrm{diam} B(\mathfrak{a}_1,\dots, \mathfrak{a}_k, i, j)} \right)^s .
$$
Let us denote 
$$\varrho_l(\mathfrak{a}_1,\dots, \mathfrak{a}_k, i, j) 
=\{(\mathfrak{a}'_1,\dots, \mathfrak{a}'_k, j, l)\in \varrho(\mathfrak{a}_1,\dots, \mathfrak{a}_k, i, j)\}.$$
Then the above summation can be separated as 
$$\sum_{l} \sum_{\varrho_l(\mathfrak{a}_1,\dots, \mathfrak{a}_k, i, j)} \left(\frac{h(\mathfrak{a}_i)h(\mathfrak{a}_j)}
{h(\mathfrak{a}'_j) h(\mathfrak{a}'_l)}\right)^{s/2}$$
\par Let us put $\frac{h(\mathfrak{a}'_j)}{h(\mathfrak{a}_j)} = a$ and $\frac{h(\mathfrak{a}'_l)}{h(\mathfrak{a}_i)} =b$, then we have 
$a > \rho^{-1}$ and $b < \rho a$.
\par We separate the choice of $l$ into two case:
\begin{enumerate}
 \item $l=i$: let us denote by $H(a)$ the number of cusps $\mathfrak{a}'_j$ such that $\mathfrak{a}_j 
 \mapsto \mathfrak{a}_j$ and $h(\mathfrak{a}'_j) \leq h(\mathfrak{a}_j)$, then it is easily seen that 
 $H(a) \asymp a^{(n-1)/2}$ (suppose $\mathfrak{a}_j =\gamma \xi \infty$, then 
 $\mathfrak{a}'_j = \gamma \xi u(\mathbf{n}) \xi^{-1} s \xi' \infty$, choice of $s$ and $\xi'$ are both finite,
 and $\|\mathbf{n}\|^2 \leq a$ implies the number of choices for $\mathbf{n}$ is $\asymp a^{(n-1)/2}$). And since
 $$\|\mathfrak{a}'_i -\mathfrak{a}_i\| \leq \frac{c}{h(\mathfrak{a}_i)}$$ 
 We denote by $F(b)$ the number of cusps in side the ball $B\left(\mathfrak{a}_i, \frac{c}{h(\mathfrak{a}_i)}\right)$ 
 with height less than or equal to $b h(\mathfrak{a}_i)$. And once 
 we fix $a$ and $b$, then for any other component $w$, cusps are elements in $\mathfrak{E}(w, \lfloor 
 h(\mathfrak{a}'_j)h(\mathfrak{a}'_i)\rfloor)$ inside the ball centered at $\mathfrak{a}_w$ with 
 radius $\frac{c}{\sqrt{h(\mathfrak{a}_i)h(\mathfrak{a}_j)}}$, from the definition of $\mathfrak{E}(w, N)$, we have the number of 
 choices for $\mathfrak{a}'_w$ is 
 $$\asymp \left(\frac{h(\mathfrak{a}'_i) h(\mathfrak{a}'_j)}{h(\mathfrak{a}_i)h(\mathfrak{a}_j)}\right)^{(n-1)/2} =(ab)^{(n-1)/2}.$$
 Therefore the summation
 $$\sum_{\varrho_i(\mathfrak{a}_1,\dots, \mathfrak{a}_k, i,j)} \left(
 \frac{h(\mathfrak{a}_i) h(\mathfrak{a}_j)}{h(\mathfrak{a}'_i) h(\mathfrak{a}'_j)}\right)^{s/2}$$
 can be estimated as the following summation
 $$\int_{a > \rho^{-1}}\int_{1}^{\rho a} (ab)^{(k-2)(n-1)/2} \frac{1}{(ab)^{s/2}} d F(b) d H(a).$$
 By simplifying it we have 
 $$\int_{a> \rho^{-1}} a^{(k-2)(n-1)/2 - s/2} \int_{1}^{\rho a} b^{(k-1)(n-1)/2 -s/2} \frac{1}{b^{(n-1)/2}} d F(b) d H(a).$$
 Let $G(x) = \int_{1}^{x} \frac{1}{b^{(n-1)/2}} d F(b)$, then from Lemma \ref{cusp_counting_upper_bound}, we have 
 $$G(x) \ll x^{(n-1)/2}.$$
 This is true since in $B(\mathfrak{a}_i, c)$, $\mathfrak{a}_i$ is the cusp with smallest height.
 \par Then the above integral equals
 $$
 \begin{array}{cl} & \int_{a > \rho^{-1}} \int_{1}^{\rho a} a^{(k-2)(n-1)/2 -s/2}  b^{(k-1)(n-1)/2 -s/2} d G(b) d H(a) \\
 = & \int_{a > \rho^{-1}} a^{(k-2)(n-1)/2 -s/2}  \left((\rho a)^{(k-1)(n-1)/2 -s/2} G(\rho a) -\int_1^{\rho a} G(b) d b^{(k-1)(n-1)/2 -s/2} \right) d H(a)\\
 = & \int_{a > \rho^{-1}} a^{(k-2)(n-1)/2 -s/2}  \left( (\rho a)^{(k-1)(n-1)/2 -s/2} G(\rho a) \right. \\ 
  & \left. +  (s/2 - (k-1)(n-1)/2) \int_1^{\rho a} b^{(k-1)(n-1)/2 -s/2 -1} G(b) d b \right) d H(a) \\
 \ll & \int_{a > \rho^{-1}} a^{(k-2)(n-1)/2 -s/2} \left[(\rho a)^{(k-1)(n-1)/2 -s/2} (\rho a)^{(n-1)/2} \right. \\ 
 & \left. + (s/2 - (k-1)(n-1)/2)\int_1^{\rho a} b^{(k-1)(n-1)/2 -s/2 -1} b^{(n-1)/2} db\right] d H(a)  \\
 = & \int_{a > \rho^{-1}} a^{(k-2)(n-1)/2 -s/2}  \left((\rho a)^{k(n-1)/2 -s/2} \right. \\
  &  \left. +\frac{s/2 - (k-1)(n-1)/2}{k(n-1)/2 -s/2} (\rho a)^{k(n-1)/2 -s/2}\right) \dd H(a) \\
 = & \frac{\rho^{k(n-1)/2 -s/2} (n-1)}{k(n-1) -s} \int_{a > \rho^{-1}} a^{(k-1)(n-1) -s} \dd H(a) \\
 = & \frac{\rho^{k(n-1)/2 -s/2} (n-1)}{k(n-1) -s} \left[ \left(a^{(k-1)(n-1) -s} H(a)\right)_{\rho^{-1}}^{\infty} 
 - \int_{a > \rho^{-1}} H(a) \dd a^{(k-1)(n-1) -s} \right].
 \end{array}
$$
Since $H(a) \asymp a^{(n-1)/2}$, the above integral asymptotically equals:
$$
\begin{array}{cl}
 & \frac{\rho^{k(n-1)/2 -s/2} (n-1)}{k(n-1) -s} \left[\left(a^{(k-1)(n-1) -s} a^{(n-1)/2}\right)_{\rho^{-1}}^{\infty} \right. \\ 
 & \left. + (s-(k-1)(n-1)) \int_{a > \rho^{-1}} a^{(k-1)(n-1)-s -1 + (n-1)/2} \dd a \right] \\
 = & \frac{\rho^{k(n-1)/2 -s/2} (n-1)}{k(n-1) -s} \left(- (\rho)^{s - (k-1)(n-1) -(n-1)/2} \right. \\ 
 & \left. +\frac{s- (k-1)(n-1)}{s-(k-1)(n-1) -(n-1)/2} (\rho)^{s-(k-1)(n-1) -(n-1)/2}\right) \\
 = & \frac{(n-1)^2}{2[s-(n-1)(k-1)-(n-1)/2][k(n-1)-s]} \rho^{k(n-1)/2 -s/2} \rho^{s-(k-1)(n-1) -(n-1)/2} .
\end{array}
$$
 \item $l\neq i$: let $H(a)$ be as above, and let $F(b)$ denote the number of cusps $\mathfrak{a}'_l$ such that 
 $$\|\mathfrak{a}'_l - \mathfrak{a}_l\| \leq \frac{c}{\sqrt{h(\mathfrak{a}_i) h(\mathfrak{a}_j)}},$$ 
 and $h(\mathfrak{a}'_l) \leq b h(\mathfrak{a}_i)$ Then from Lemma \ref{cusp_counting_upper_bound} tells that 
 $$\int_{b \leq X} \frac{1}{(b h(\mathfrak{a}_i))^{(n-1)/2}} \dd F(b) \ll 
 (X h(\mathfrak{a}_i))^{(n-1)/2} \frac{1}{(h(\mathfrak{a}_i)h(\mathfrak{a}_j))^{(n-1)/2}},$$
 which implies 
 $$\int_{b \leq X} \frac{1}{b^{(n-1)/2}} d F(b) \ll X^{(n-1)/2} \left(\frac{h(\mathfrak{a}_i)}{h(\mathfrak{a}_j)}\right)^{(n-1)/2}.$$
 We denote $G(X) = \int_{b \leq X} \frac{1}{b^{(n-1)/2}} d F(b)$, then we have 
 $$G(X) \ll X^{(n-1)/2} \left(\frac{h(\mathfrak{a}_i)}{h(\mathfrak{a}_j)}\right)^{(n-1)/2}.$$
 Now we fix $a$ and $b$ as above, for subindex $w \neq j, i, l$, $\mathfrak{a}'_w \in \mathfrak{E}(w, \lfloor h(\mathfrak{a}'_j) h(\mathfrak{a}'_l) \rfloor)$ and 
 $$\|\mathfrak{a}'_w -\mathfrak{a}_w\| \leq \frac{c}{\sqrt{h(\mathfrak{a}_i) h(\mathfrak{a}_j)}},$$ the number of choices for $\mathfrak{a}'_w$ is 
 asymptotically equal to 
 $$\left(\frac{h(\mathfrak{a}'_j) h(\mathfrak{a}'_l)}{h(\mathfrak{a}_i) h(\mathfrak{a}_j)}\right)^{(n-1)/2} = (ab)^{(n-1)/2}.$$
 For index $i$ since 
 $$\|\mathfrak{a}'_i - \mathfrak{a}_i\| \leq \frac{c}{h(\mathfrak{a}_i)},$$
 the number of choices for $\mathfrak{a}'_i$ is asymptotically equal to
 $$\left(\frac{h(\mathfrak{a}'_j) h(\mathfrak{a}'_l)}{h^2(\mathfrak{a}_i)}\right)^{(n-1)/2} = (ab)^{(n-1)/2} 
 \left(\frac{h(\mathfrak{a}_j)}{h(\mathfrak{a}_i)}\right)^{(n-1)/2}.$$
 Thus the summation
 $$\sum_{\varrho_l(\mathfrak{a}_1, \dots, \mathfrak{a}_k, i,j)} \left(\frac{h(\mathfrak{a}_i)h(\mathfrak{a}_j)}
 {h(\mathfrak{a}'_j) h(\mathfrak{a}'_l)}\right)^{s/2}$$
 can be estimated as
 $$
 \begin{array}{cl}
  & \int_{a > \rho^{-1}}  \int_{0}^{\rho a} \left(\frac{h(\mathfrak{a}_j)}{h(\mathfrak{a}_i)}\right)^{(n-1)/2} (ab)^{(k-2)(n-1)/2 -s/2} \dd F(b) \dd H(a) \\
  = & \left(\frac{h(\mathfrak{a}_j)}{h(\mathfrak{a}_i)}\right)^{(n-1)/2} \int_{a > \rho^{-1}} a^{(k-2)(n-1)/2 -s/2} \int_{0}^{\rho a} b^{(k-2)(n-1)/2 - s/2} \dd F(b) \dd H(a) \\
  = & \left(\frac{h(\mathfrak{a}_j)}{h(\mathfrak{a}_i)}\right)^{(n-1)/2} \int_{a > \rho^{-1}} a^{(k-2)(n-1)/2 -s/2} \int_0^{\rho a} b^{(k-1)(n-1)/2 -s/2} \dd G(b) \dd H(a) \\
  = & \left(\frac{h(\mathfrak{a}_j)}{h(\mathfrak{a}_i)}\right)^{(n-1)/2} \int_{a > \rho^{-1}} a^{(k-2)(n-1)/2 -s/2} \left[(\rho a)^{(k-1)(n-1)/2 -s/2} G(\rho a) \right.
  \\ & \left. + (s/2 -(k-1)(n-1)/2)\int_{0}^{\rho a} G(b) b^{(k-1)(n-1)/2 -s/2 -1} \dd b\right] \dd H(a) \\
  \ll & \left(\frac{h(\mathfrak{a}_j)}{h(\mathfrak{a}_i)}\right)^{(n-1)/2} \int_{a > \rho^{-1}} a^{(k-2)(n-1)/2 -s/2}  
  \left[\left(\frac{h(\mathfrak{a}_i)}{h(\mathfrak{a}_j)}\right)^{(n-1)/2}(\rho a)^{k(n-1)/2 -s/2} \right. \\ 
   & \left. + \left(\frac{h(\mathfrak{a}_i)}{h(\mathfrak{a}_j)}\right)^{(n-1)/2} (s/2 -(k-1)(n-1)/2)\int_0^{\rho a} b^{k(n-1)/2 -s/2 -1} \dd b \right] \dd H(a) \\
   = & \frac{\rho^{k(n-1)/2 -s/2} (n-1)}{k(n-1) -s} \int_{a > \rho^{-1}} a^{(k-1)(n-1) -s} \dd H(a) \\
   = & \frac{\rho^{k(n-1)/2 -s/2} (n-1)}{k(n-1) -s} \left[\left(a^{(k-1)(n-1) -s} H(a)\right)_{\rho^{-1}}^{\infty} \right. \\
    & \left. + (s -(k-1)(n-1))\int_{a > \rho^{-1}} H(a) a^{(k-1)(n-1) -s-1} \dd a \right] \\
    \asymp & \frac{(n-1)^2}{2[s-(n-1)(k-1)-(n-1)/2][k(n-1)-s]} \rho^{k(n-1)/2 -s/2} \rho^{s-(k-1)(n-1) -(n-1)/2} .
 \end{array}
$$
We omit several steps in the last estimate since it is the same as the first case.
\end{enumerate}
Summing up the two cases above, we have the summation
$$
\begin{array}{cl}
& \sum_{(\mathfrak{a}'_1,\dots, \mathfrak{a}'_k, j, j') \in \varrho(\mathfrak{a}_1,\dots, \mathfrak{a}_k, i, j)} 
\left(\frac{\mathrm{diam} B(\mathfrak{a}'_1,\dots, \mathfrak{a}'_k, j, j')}
{\mathrm{diam} B(\mathfrak{a}_1,\dots, \mathfrak{a}_k, i, j)} \right)^s  \\ 
\asymp & \frac{(n-1)^2}{2[s-(n-1)(k-1)-(n-1)/2][k(n-1)-s]} \rho^{k(n-1)/2 -s/2} \rho^{s-(k-1)(n-1) -(n-1)/2}. 
\end{array}$$
For any $\rho >0$ small, and $s = (k-1)(n-1) +(n-1)/2 + \rho^{1/8}$, we have:
$$
\begin{array}{cl}
 & \frac{(n-1)^2}{2[s-(n-1)(k-1)-(n-1)/2][k(n-1)-s]} \rho^{k(n-1)/2 -s/2} \rho^{s-(k-1)(n-1) -(n-1)/2} \\
 \asymp & \rho^{(n-1)/4 + \rho^{1/8}/2 -1/8 } \\ 
 \leq & \rho^{1/8}.
\end{array}
$$
Thus for $\rho >0$ small enough, we prove that 
$$\sum_{(\mathfrak{a}'_1,\dots, \mathfrak{a}'_k, j, j') \in \varrho(\mathfrak{a}_1,\dots, \mathfrak{a}_k, i, j)} 
\left(\frac{\mathrm{diam} B(\mathfrak{a}'_1,\dots, \mathfrak{a}'_k, j, j')}
{\mathrm{diam} B(\mathfrak{a}_1,\dots, \mathfrak{a}_k, i, j)} \right)^s \leq 1,$$
for $s = (k-1)(n-1)+ (n-1)/2 +\rho^{1/8}$.
From Theorem \ref{tool_for_upper_bound_hausdorff_dimension}, we show that 
$$\dim_H E(\rho) \leq (k-1)(n-1)+ (n-1)/2 +\rho^{1/8} .$$
\end{proof}
The above theorem shows that 
$$\dim_H \mathfrak{B}_k \leq (k-1)(n-1)+ (n-1)/2 +\rho^{1/8},$$
for all $\rho >0$ small enough. By letting $\rho \rightarrow 0$, we have:
\begin{equation}
 \label{upper_bound_hausdorff_dimension_equation}
 \dim_H \mathfrak{B}_k \leq (k-1)(n-1)+ (n-1)/2.
\end{equation}
\begin{proof}[Proof of Theorem \ref{goal}]
\par Combining (\ref{lower_bound_hausdorff_dim_equation}) and (\ref{upper_bound_hausdorff_dimension_equation}), we complete the proof
of Proposition \ref{main result}.  This concludes Theorem \ref{goal} from the reduction argument in Section 2.
\end{proof}

%\printbibliography
\bibliography{reference}{}
\bibliographystyle{plain}

\end{document}